\newcommand{\margnote}[1]{
\ifthenelse{\boolean{shownotes}}%
{\marginpar{\raggedright\tiny\texttt{#1}}}%
{}%
}
\newcommand{\hole}[1]{
\ifthenelse{\boolean{shownotes}}%
{\begin{center} \fbox{ \rule {.25cm}{0cm} \rule[-.1cm]{0cm}{.4cm}
\parbox{.85\textwidth}{\begin{center} \texttt{#1}\end{center}} \rule
{.25cm}{0cm}}\end{center}} {} }
\title[An optimal transport approach of hypocoercivity]{An optimal transport approach of hypocoercivity for the 1d kinetic Fokker-Plank equation}
\author[Salem]{Samir Salem}
\address[Samir Salem]{\newline Ecole Polytechnique, CMLS 91128 Palaiseau Cedex, France}
\email{samir.salem@polytechnique.edu}
\numberwithin{equation}{section}
\newtheorem{theorem}{Theorem}[section]
\newtheorem{lemma}{Lemma}[section]
\newtheorem{proposition}{Proposition}[section]
\newtheorem{definition}{Definition}[section]
\newcommand{\R}{\mathbb R}
\newcommand{\HH}{\mathcal{H}}
\newcommand{\II}{\mathcal{I}}
\newcommand{\MM}{\mathcal{M}}
\newcommand{\XX}{\mathcal{X}}
\newcommand{\YY}{\mathcal{Y}}
\newcommand{\PP}{\mathcal{P}}
\newcommand{\CC}{\mathcal{C}}
\newcommand{\JJ}{\mathcal{J}}
\newcommand{\mt}{\mathcal{T}}
\newcommand{\lal}{\langle}
\newcommand{\ral}{\rangle}
\newcommand{\lt}{\left}
\newcommand{\rt}{\right}
\newcommand{\pa}{\partial}
\newcommand{\mb}{\mathbf{1}}
\newcommand{\bq}{\begin{equation}}
\newcommand{\eq}{\end{equation}}
\def\charf {\mbox{{\text 1}\kern-.30em {\text l}}}
\begin{document}
%%%%%%%%%%%%%%%%
\allowdisplaybreaks

\date{\today}

%\subjclass[2020]{82C40, 49Q22, 35Q84, 82C31}

\keywords{Hypocoercivity, optimal transport, kinetic Fokker-Planck equation, functional inequalities}

\begin{abstract} 
A quadratic optimal transport metric on the set of probability measure over $\R^2$ is introduced. The quadratic cost is given by the euclidean norm on $\R^2$ associated to some well chosen symmetric positive matrix, which makes the metric equivalent to the usual Wasserstein-2 metric. The dissipation of the distance to the equilibrium along the kinetic Fokker-Planck flow, is bounded by below in terms of the distance itself. It enables to obtain some new type of trend to equilibrium estimate in Wasserstein-2 like metric, in the case of non-convex confinement potential. 
\end{abstract}

\maketitle \centerline{\date}

%%%%%%%%%%%%%%%%%%%%%%%%%%%%%%%%%%%%%%%%%%%%%%%%%%%%%%%%%%%%%%%%%%%%%%%%%%%%%%%%%5
%
%
%                        Section: Introduction
%
%
%%%%%%%%%%%%%%%%%%%%%%%%%%%%%%%%%%%%%%%%%%%%%%%%%%%%%%%%%%%%%%%%%%%%%%%%%%%%%%%%%

\section{Introduction}\label{intro}
The kinetic Fokker-Planck equation (in one dimension of space) 
\bq
\label{eq:PS}
\pa_t f_t+v\pa_x f_t- U'(x)\pa_v f_t=\pa_v \lt( vf_t + \pa_v f_t\rt),
\eq
is a kinetic model which describes the time evolution of the law of some particle in the phase space, submitted to some Brownian force, velocity friction and evolving in some confinement potential $U\in \CC^2(\R)$
\bq
\label{eq:LGV}
\begin{aligned}
	&dX_t= V_t dt\\
	&dV_t= -U'(X_t)dt-V_tdt +\sqrt{2}dW_t
\end{aligned}
\eq
where $(W_t)_{t\geq 0}$ is a Brownian motion.\\
The well posedness theory for \eqref{eq:PS} is classical, since all the coefficients are smooth. \newline
As the diffusion operator is degenerated and acts only on velocity, the theories of regularity and trend to equilibrium are more delicate. The latter is known as \textit{hypocoercivity} since Villani's eponymous memoir \cite{VilHypo}.\newline
In order to roughly sketch the core idea of this theory, let us sat that for some $a,b,c>0$ well chosen, a twisted norm on the Sobolev space weighted by the equilibrium  measure
\[
f_\infty(x,v)=Z^{-1}e^{-U(x)-\frac{|v|^2}{2}}, \ Z=\sqrt{2\pi}\int_{\R} e^{-U(y)}dy,
\]
(the confusion between probability measures and densities may be throughout the paper), is defined as
\[
\|\cdot\|^2_{\tilde{H}_1}=\|\cdot\|^2_{L^2(f_\infty)}+a\|\pa_x \cdot\|^2_{L^2(f_\infty)}+b\lal \pa_x \cdot , \pa_v \cdot \ral_{L^2(f_\infty)}+c\|\pa_x \cdot\|^2_{L^2(f_\infty)}.
\]
It is equivalent to the usual norm on $H^1(f_\infty)$, provided that the quadratic form associated to the triplet $(a,b,c)$ is positive definite.\newline
But compared to the usual norm, it includes the cross term $b\lal \pa_x \cdot , \pa_v \cdot \ral$ which, when differentiated along \eqref{eq:PS}, enables to conjugate the effects of the diffusion $\pa_v^2$ in velocity and the free transport $v\pa_x$ in the phase space. The result (see \cite[Theorem 27]{VilHypo}) is that equation \eqref{eq:PS} is a contraction for this specific norm, i.e.
\[
\frac{1}{2}\frac{d}{dt}\|f_t/f_\infty-1\|^2_{\tilde{H}_1}\leq - \kappa \|f_t/f_\infty-1\|^2_{\tilde{H}_1}. 
\]
It is obtained under the assumption that the confinement potential $U$ (up to an additive constant) is such that the probability measure $e^{-U}$ satisfies a Poincaré inequality  \newline
Under the stronger assumption that $e^{-U}$ satisfies a logartihmic Sobolev inequality (see \cite[Section 9.2]{VilOT}), a similar result is also derived, in terms of relative entropy w.r.t. the equilibrium measure. A typical sufficient condition for log-Sobolev inequality to be satisfied, is for instance that $U$ can be decomposed as the sum of some strictly convex and a bounded functions (see \cite[Theorem 9.9 ]{VilOT}). More precisely, the decay estimate
\bq
\label{eq:HVill}
\HH(f_t|f_\infty):=\int_{\R} f_t \ln \lt( \frac{f_t}{f_\infty} \rt)\leq C \frac{e^{-\lambda (t-t_0)}}{t_0^3}\HH(f_0|f_\infty), \ \forall t_0\in (0,1),
\eq
is obtained (see \cite[Remark 41 (7.11)]{VilHypo}. Since $e^{-U}$ satisfies a $\lambda$-log-Sobolev inequality, $f_\infty$ satisfies $(\lambda\wedge 1)$-log-Sobolev inequality, and thus a transport inequality (see \cite[section 9.3.1]{VilOT}), which means that for any $f\in \PP_2(\R^2)$
\[
W_2^2(f,f_\infty)\leq \frac{\HH(f|f_\infty)}{\lambda\wedge 1}.
\]
In particular, it is clear that $W_2(f_t,f_\infty)=O(e^{-\frac{\kappa}{2}t})$, and the time asymptotic behavior of the distance of the solution to \eqref{eq:PS} and the equilibrium measure is well understood. \newline

Under the more restrictive condition that $U$ is strictly convex, Bolley et al. \cite{BGGKin}, extended in the core idea of hypocoercivity to the framework of coupling metric, by introducing a twisted Wasserstein-2 metric, under which equation \eqref{eq:PS} is a contraction. \newline
More precisely for $r,s>0$ well chosen, and $\mu,\nu\in \PP_2(\R^2)$, a distance $d_Q$ is defined as
\[
d_Q^2(\mu,\nu)=\inf_{\gamma\in \Pi(\mu,\nu)}\int_{\R^{2} \times \R^{2}}  (r|x_1-x_2|^2+2(x_1-x_2)(v_1-v_2)+s|v_1-v_2|^2)\gamma(dz_1,dz_2),
\]
where $\Pi(\mu,\nu)$ is the choice of probability measures on $\R^2\times \R^2$ admitting respectively $\mu$ and $\nu$ as marginals. Compared to the usual Wasserstein-2 metric, there is a cross term between position and velocity, similarly as what is done in the $L^2$ theory. \newline
This enables the authors to show that there is $\kappa>0$ such that if $(f_t)_{t\geq 0},(g_t)_{t\geq 0}$ are the solutions to \eqref{eq:PS} starting from $f_0,g_0$ respectively, there holds for any $t\geq 0$
\[
d_Q(f_t,g_t)\leq e^{-\kappa t}d_Q(f_0,g_0).
\]
In particular, since for the set of $r,s$ the quadratic form associated to $(r,2,s)$ is positive definite, $d_Q$ is metrically equivalent to the usual Wasserstein-2 metric, it provides a new way to estimate of the exponential convergence of $W_2(f_t,f_\infty)$. But the estimate is somehow smoother than \eqref{eq:HVill} for short time.\newline
This result is obtained by so-called synchronous coupling, which means that the quantity of interest is the expectation of the squared distance between two solutions to \eqref{eq:LGV} for two distinct initial conditions of law $f_0$ and $g_0$, and the same Brownian motion.\newline
The fact that the Brownian motion is the same for both trajectories, roughly means that the technique is actually indifferent to the presence of diffusion. And that is why the strict convexity assumption on $U$ is needed. In the case with no diffusion, the physical example of a bullet evolving in a gap with friction is instructive to realize that strict convexity of the confinement is needed to ensure uniqueness of stationary state. And since no advantage is taken form the diffusion in velocity, the analogy with the standard $L^2$ theory is limited.\newline 

In this paper, we aim at presenting through a simple 1d toy example, how to extend this idea of twisted Wasserstein metric to obtain a $W_2$ hypocoercivity theory closer to the standard $L^2$ theory (and also to the relative entropy theory by some aspects). For that purpose, we consider some locally non convex potential $U$ satisfying : 
\bq
\label{eq:U}
U(x)=\alpha \frac{|x|^2}{2}+\psi(x)
\eq

In that case, the fact that $W_2(f_t,f_\infty)$ converges to $0$ exponentially fast is well known (as noted above). But the non convexity of the confinement prevents from obtaining contraction estimate in equivalent metric, by using standard coupling techniques. We then extend the idea of \cite{BGG1,BGG2}, which consists in estimating the dissipation of the optimal transport distance itself, and not some quantity which bounds it by above. The argument strongly relies on the optimal transportation map between the solution and the equilibrium measure, and the fact that, in this quadratic framework, this map derives from some convex potential \textit{à la} Brenier.   \newline

Obtaining some exponential stability estimates in optimal transport or coupling distances, for kinetic Fokker-Planck in non convex landscape, is a problem that has dragged a lot of attention from the kinetic community lately. We can mention the recent \cite{GM}, which relies on some entropy method, as roughly described above. In \cite{DET}, the authors study the equation set on the torus in space, with no confinement, and obtain some exponential stability estimate in usual $W_2$ distance, up to some multiplicative constant, with a probabilistic approach. \newline
Finally, let us mention that for the weaker $W_1$ metric, recent works \cite{Eva},\cite{Ebe} based on sophisticated markovian coupling, have proved their efficiency to solve the problem of non convex landscape.

%%%%%%%%%%%%%%%%%%%%%%%%%%%%%%%
%
%   \section{Preliminaries and main results}
%
%%%%%%%%%%%%%%%%%%%%%%%%%%%%%%%

\section{Preliminaries and main results}

\subsection{Optimal transport metric on $\R\times \R$} We begin with some optimal transport considerations. \newline

The usual Wasserstein-2 metric on $\PP_2(\R\times \R)$ is defined as 
\[
W^2_2(\mu,\nu)=\inf_{\mt=(\mt^1,\mt^2), \mt\#\nu=\mu}\int_{\R\times \R}\lt|\mt^2(x,v)-x\rt|^2+|\mt^2(x,v)-v|^2\nu(dx,dv),
\]
for $\mu$ and $\nu$ admitting smooth enough densities (see for instance \cite[Theorem 2.12]{VilOT}). There exists a unique map $\mt$ which achieves the infimum, and it is given as the gradient of some (up to an additive constant) convex potential.\newline
As noted in the above introduction, in order to mimic the $L^2$ hypocoercivity theory, we need some cross term between position and velocity. In that purpose we introduce some positive definite matrix 
\[
A=\begin{pmatrix}
a  & b\\ 
b  & c
\end{pmatrix} 
\]
with $A^{-1}=\det(A)^{-1}\begin{pmatrix}
c  & -b\\ 
-b  & a
\end{pmatrix} $. We denote the eigenvalues of this matrix
\[
\nu_1=\frac{c+a-\sqrt{(c-a)^2+4b^2}}{2}, \ \nu_2=\frac{c+a+\sqrt{(c-a)^2+4b^2}}{2},
\]
and define the equivalent norm on $\R^2$
\[
|(x,v)|^2_A:=(x,v)\cdot A(x,v)=a|x|^2+2bxv+c|v|^2.
\]
This twisted euclidean norm is the cost function associated to the optimal transport metric given in the 

\begin{definition}
	For $A$ a symmetric positive definite matrix,we define $W_A$ for any $\mu, \nu\in \PP_2(\R^2)$ as 
 \[
 W_A^2(\mu,\nu)=\inf_{\gamma \in \Pi(\mu,\nu)}\int_{\R^2}|(x_1,v_1)-(x_2,v_2)|_A^2\gamma(dx_1,dv_1,dx_2,dv_2),
 \]
 where
 \[
 \Pi(\mu,\nu)=\Bigl\{ \gamma \in \PP(\R^2\times \R^2), \ \gamma(K\times \R^2)=\mu(K), \ \gamma(\R^2\times K)=\nu(K), \ \forall K\subset \R^2 \Bigr\}.
 \]
Moreover, $W_A$ is a distance on $\PP_2(\R^2)$. 
\end{definition}  
See for instance \cite[Theorem 1.3]{VilOT} for the well posedness and \cite[Theorem 7.3]{VilOT} for the metric aspect. The main result of this section, is a result of polarization of the optimal transport map given in the
\begin{theorem}[Brenier's theorem for twisted Wasserstein metric]
	\label{thm:Bre}
	For any $\mu,\nu\in \PP_2(\R^2)$ having $\CC^1$, strictly positive densities :
	\begin{itemize}
	\item[$(i)$] there is (unique up to additive constant) $\varphi$ convex such that $A^{-1}\nabla\varphi\# \nu=\mu$ and
	\[
	W_A^2(\mu,\nu)=\int_{\R^{2}}\lt|(x,v)-A^{-1}\nabla \varphi(x,v)\rt|^2_A\nu(dx,dv),
	\]
	\item[$(ii)$] if $\varphi^*$ denotes the convex conjugate of $\varphi$, then $\nabla \varphi^*(A\cdot)\#\mu = \nu$ and
	\[
	W_A^2(\mu,\nu)=\int_{\R^{2}}\lt|(x,v)-\nabla \varphi^*(A(x,v))\rt|^2_A\mu(dx,dv),
	\]
	\item[$(iii)$] $\varphi$ is $\CC^2(\R^2)$, (in particular $\pa_x^2\varphi,\pa_v^2\varphi>0$) and $\forall(x,v)\in \R^2$ there holds
	\[
	\nabla \varphi^*(\nabla \varphi(x,v))=(x,v).
	\]  
	\end{itemize}
\end{theorem}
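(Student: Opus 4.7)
The plan is to reduce everything to the classical Brenier theorem via a linear change of variables that absorbs the matrix $A$. Let $L := A^{1/2}$ denote the unique symmetric positive definite square root of $A$, so that $L^2 = A$ and
\[
|z-z'|_A^2 = (z-z') \cdot A(z-z') = |L(z-z')|^2 = |Lz-Lz'|^2, \quad \forall z,z' \in \R^2.
\]
Pushing forward by $L$, set $\tilde{\mu} := L\#\mu$ and $\tilde{\nu} := L\#\nu$. The map $(\gamma \mapsto (L\otimes L)\#\gamma)$ sets up a bijection between $\Pi(\mu,\nu)$ and $\Pi(\tilde{\mu},\tilde{\nu})$, and the above identity yields
\[
W_A^2(\mu,\nu) \;=\; W_2^2(\tilde{\mu},\tilde{\nu}).
\]
Since $\mu,\nu$ have $\CC^1$, strictly positive densities, so do $\tilde{\mu},\tilde{\nu}$.

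For $(i)$, I apply the classical Brenier theorem (e.g.\ \cite[Theorem 2.12]{VilOT}) to $\tilde{\mu},\tilde{\nu}$: there is a convex $\tilde{\varphi}$, unique up to an additive constant, with $\nabla\tilde{\varphi}\#\tilde{\nu}=\tilde{\mu}$ and $W_2^2(\tilde{\mu},\tilde{\nu})=\int |\nabla\tilde{\varphi}(w)-w|^2\,\tilde{\nu}(dw)$. I then define
\[
\varphi(x,v) := \tilde{\varphi}(L(x,v)),
\]
which is convex as the composition of a convex function with a linear map. By the chain rule $\nabla \varphi(x,v) = L\,\nabla\tilde{\varphi}(L(x,v))$, hence $A^{-1}\nabla\varphi(x,v)=L^{-1}\nabla\tilde{\varphi}(L(x,v))$. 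Since $\tilde{\nu}=L\#\nu$, the identity $\nabla\tilde{\varphi}\#\tilde{\nu}=\tilde{\mu}$ translates into $A^{-1}\nabla\varphi\#\nu=\mu$. Uniqueness (up to an additive constant) is inherited from uniqueness of $\tilde\varphi$, since the correspondence $\varphi \leftrightarrow \tilde\varphi$ is bijective. Changing variables $w = L(x,v)$ in the Brenier cost identity and using $|L^{-1}u - z|_A^2 = |u - Lz|^2$ (for $u = \nabla\tilde\varphi(Lz)$) gives the formula in $(i)$.

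For $(ii)$, a direct computation of the Legendre transform gives $\varphi^*(y)=\tilde{\varphi}^*(L^{-1}y)$, so $\nabla \varphi^*(y) = L^{-1}\nabla\tilde{\varphi}^*(L^{-1}y)$, and therefore $\nabla\varphi^*(A(x,v)) = L^{-1}\nabla\tilde{\varphi}^*(L(x,v))$. Since by classical Brenier $\nabla\tilde{\varphi}^*\#\tilde{\mu}=\tilde{\nu}$, the same change-of-variables argument as in $(i)$ yields $\nabla\varphi^*(A\cdot)\#\mu=\nu$ and the corresponding cost identity.

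For $(iii)$, under the hypothesis of $\CC^1$ strictly positive densities I invoke Caffarelli's regularity theorem applied to the Monge--Amp\`ere equation satisfied by $\tilde{\varphi}$: this yields $\tilde{\varphi}\in \CC^2(\R^2)$ and strict convexity, so $\varphi = \tilde\varphi\circ L \in \CC^2(\R^2)$ with positive definite Hessian $\nabla^2 \varphi = L\,\nabla^2\tilde\varphi\,L$, whence in particular the diagonal entries $\pa_x^2\varphi,\pa_v^2\varphi>0$. Strict convexity together with $\CC^2$ regularity finally implies $\nabla\varphi^*\circ\nabla\varphi = \mathrm{id}$, either by standard convex analysis or by translating the analogous identity for $\tilde\varphi$ through the linear change of variables.

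The only non-routine input is $(iii)$: the reduction to standard $W_2$ theory through $L=A^{1/2}$ handles $(i)$ and $(ii)$ almost mechanically, whereas the $\CC^2$ regularity and the Legendre duality in $(iii)$ rely on Caffarelli's regularity, which is the real work.
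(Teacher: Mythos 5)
Your proof is correct, and it takes a genuinely different route from the paper. You absorb the matrix $A$ into a linear change of variables $L = A^{1/2}$, reduce $W_A$ to the standard $W_2$ metric for the pushed-forward measures $\tilde\mu = L\#\mu$, $\tilde\nu = L\#\nu$, and then invoke the classical (quadratic-cost) Brenier theorem; the identities for $\varphi$, $\varphi^*$, and $\nabla^2\varphi$ are then mechanical consequences of the chain rule and $L^{-1}AL^{-1} = \mathrm{Id}$. The paper instead applies the general optimal-transport theorem for strictly convex, superlinear costs (Villani's Theorem 2.44, Santambrogio's Theorem 1.17) directly to the cost $c(z)=|z|_A^2$: this produces an optimal map of the form $z \mapsto z - \nabla c^*(\nabla\Phi(z))$ with $\Phi$ a $c$-concave function, and the convex potential is then \emph{constructed} as $\varphi(z) = \tfrac12|z|_A^2 - \Phi(z)$. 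Your reduction is more elementary and arguably cleaner for $(i)$--$(ii)$, since it avoids the $c$-concavity machinery entirely and makes the correspondence between $\varphi$ and $\tilde\varphi$ completely explicit; the paper's route is more compact to state once the general theorem is accepted as a black box. For $(iii)$ the two proofs are essentially the same idea (regularity theory for the Monge--Amp\`ere equation), but note that a bare citation of ``Caffarelli's regularity theorem'' is slightly imprecise on $\R^2$: the classical results are for bounded domains, and the global $\CC^2$ regularity on an unbounded domain under the stated hypotheses requires something like the Cordero-Erausquin--Figalli result the paper cites. You would want to replace the Caffarelli citation with a reference handling the unbounded case.
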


The proof of this result can be found in the below Appendix. But it mostly consists in a simple application of \cite[Theorem 1.17]{OTAM} or \cite[Theorem 2.44]{VilOT}, and regularity theory for the Monge-Ampère equation. One of the main feature of this metric is given in the 
\begin{proposition}
	\label{prop:equ}
	The distance $W_A$ is metrically equivalent to $W_2$.
\end{proposition}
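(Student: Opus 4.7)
The plan is to reduce the equivalence to the pointwise equivalence of the quadratic forms $|\cdot|_A^2$ and $|\cdot|^2$ on $\R^2$, and then transfer it to the transport distances since both are defined as infima over the same set of couplings $\Pi(\mu,\nu)$.

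First I would observe that, since $A$ is symmetric positive definite, it is diagonalizable in an orthonormal basis with positive eigenvalues $\nu_1,\nu_2>0$ (already written explicitly in the paper). The standard Rayleigh quotient bound then gives, for every $z=(x,v)\in\R^2$,
\[
\nu_1\,|z|^2 \;\leq\; |z|_A^2 \;=\; z\cdot A z \;\leq\; \nu_2\,|z|^2,
\]
where $|z|$ denotes the usual Euclidean norm.

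Next I would apply this pointwise inequality with $z=(x_1,v_1)-(x_2,v_2)$ inside the integral defining $W_A^2$. For any fixed coupling $\gamma\in\Pi(\mu,\nu)$ this yields
\[
\nu_1 \int_{\R^2\times\R^2}\!\!|(x_1,v_1)-(x_2,v_2)|^2\,\gamma \;\leq\; \int_{\R^2\times\R^2}\!\!|(x_1,v_1)-(x_2,v_2)|_A^2\,\gamma \;\leq\; \nu_2 \int_{\R^2\times\R^2}\!\!|(x_1,v_1)-(x_2,v_2)|^2\,\gamma.
\]
Taking the infimum over $\gamma\in\Pi(\mu,\nu)$ on the middle term, then using the upper bound with the optimal $\gamma$ for $W_2$ and the lower bound with the optimal $\gamma$ for $W_A$, we obtain
\[
\nu_1\,W_2^2(\mu,\nu) \;\leq\; W_A^2(\mu,\nu) \;\leq\; \nu_2\,W_2^2(\mu,\nu),
\]
which is exactly the metric equivalence.

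I do not expect any serious obstacle: the argument is purely linear-algebraic once one notices that the admissible set of couplings $\Pi(\mu,\nu)$ is the same in both variational problems. The only thing worth being careful about is that the bounds on the cost integrand are valid pointwise (hence integrate term by term), so no measurability or integrability subtlety arises beyond the finite second moment assumption already built into $\PP_2(\R^2)$.
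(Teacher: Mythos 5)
Your proof is correct and rests on exactly the same idea as the paper's: the Rayleigh quotient bound $\nu_1|z|^2\leq|z|_A^2\leq\nu_2|z|^2$ transferred through the variational definition of the distances. The only difference is cosmetic but worth noting: the paper phrases the argument in terms of the optimal transport \emph{maps} $\mt$ and $\mt'$ (first writing $W_A^2$ via its optimal map, then replacing it by the $W_2$-optimal map, which is suboptimal for the $A$-cost), whereas you work directly with \emph{couplings} and take infima over $\Pi(\mu,\nu)$. Your version is marginally more general, since it does not invoke the existence of an optimal map and hence applies to arbitrary $\mu,\nu\in\PP_2(\R^2)$ without any density or regularity assumption; it is also arguably cleaner, as the two inequalities follow symmetrically from choosing the coupling optimal for the other cost. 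No gap in either direction.
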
	
\begin{proof}
	Let $\mu,\nu\in \PP_2(\R^2)$. 
	Let $\mt$ be the optimal transport map from $\nu$ toward $\mu$ w.r.t. $W_2$ (resp. $\mt'$ the optimal transport map from $\nu$ toward $\mu$ w.r.t. $W_A$). Since for any $z\in \R^2$ we have
	\[
	\nu_1 |z|^2\leq |z|^2_A\leq \nu_2 |z|^2
	\]
	\begin{align*}
	W^2_A(\mu,\nu)=\int_{\R^2}|\mt'(x,v)-(x,v)|_A^2\nu(dx,dv)\geq \nu_1 \int_{\R^2}|\mt'(x,v)-(x,v)|^2\nu(dx,dv)&\geq \nu_1 \int_{\R^2}|\mt(x,v)-(x,v)|^2\nu(dx,dv)\\
	&=\nu_1 W_2^2(\mu,\nu),
	\end{align*}
	and the converse comes with a similar argument.
\end{proof}
\subsection{Main result}
We are now in position to state the main result of the paper. \newline

We begin with some smallness assumption on the perturbation, which is consistent with $WJ$ type of inequality (see for instance \cite[Definition 3.1]{BGG1}). \newline 
More precisely, we assume that the perturbation $\psi$ is compactly supported on $[-R,R]$ for some $R>0$, satisfying the following smallness assumption
\bq
\label{eq:cond}
\begin{aligned}
&2\|\psi'\|_{L^\infty}<\alpha<\|\psi''\|_{L^\infty}<10^{-1}, \ \gamma:=\|\psi''\|_{L^\infty}-\alpha,\ c_*>2b_*,\\
&b_*:=(\gamma+\|\psi''\|^2_{L^\infty})\frac{2}{1+\sqrt{1-2(\gamma+\|\psi''\|_{L^\infty})}},\\
&c_*=\frac{1}{20} e^{ -\frac{\alpha}{2} \lt(R+2\rt)^2}e^{-\|\psi\|_{L^\infty}} \lt(1 \wedge  \frac{(\alpha -2\|\psi'\|_{L^\infty})}{4} \rt).
\end{aligned}
\eq
This condition is quite technical, and rather complicated to check. But at this cost, it enables a simple choice of the coefficients of the matrix A, in a manner of speaking. Nevertheless, this restriction is non void (see Lemma \ref{lem:novoid}). The part of this restriction which must be emphasized, is that $\gamma=\|\psi''\|_{L^\infty}-\alpha>0$, so that $U''$ is non positive, and classical coupling method fails for quadratic optimal transport metrics.  \newline
The confinement potential being set, we define the vector field $B$ as
\[
B(x,v)=\begin{pmatrix}
v\\ 
-U'(x)-v
\end{pmatrix} =\begin{pmatrix}
v\\ 
-\alpha x-\psi'(x)-v
\end{pmatrix}.
\]
For $f,g\in \PP_2(\R^2)$, and $\varphi$ the unique (up to an additive constant) convex potential such that $A^{-1}\nabla \varphi$ transports $g$ onto $f$, optimally w.r.t. $W_A$ (as given by Theorem \ref{thm:Bre} above) ,we introduce now the key functional 	
\bq
\begin{aligned}
	\JJ_A(f|g):=&-\int_{\R^2}\lal B(A^{-1}\nabla \varphi(x,v))- B(x,v),A^{-1}\nabla \varphi(x,v)-(x,v) \ral_A  g(x,v)dxdv\\
	&+\int_{\R^2} (\pa_v^2\varphi)^{-1} \lt( \lt( \pa_v^2\varphi-c   \rt)^2+\frac{\lt(b \lt( \pa_v^2\varphi-c   \rt)-c \lt( \pa_{x,v}^2\varphi-b  \rt)  \rt)^2}{\det(\nabla^2 \varphi)} \rt) g(x,v)dxdv.
\end{aligned}
\eq
The main feature of this functional is given in the 
\begin{proposition}
	\label{prop:diss}
	Let $(f_t)_{t\geq 0}$, $(g_t)_{t\geq 0}$ be two solutions to \eqref{eq:PS}. Then for any symmetric positive definite matrix and any $t>0$ there holds	
	\[
	\frac{1}{2}\frac{d}{dt}W_A^2(f_t,g_t)\leq - \JJ_A(f_t|g_t).
	\]
\end{proposition}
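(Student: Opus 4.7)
My plan is to differentiate $W_A^2(f_t,g_t)$ along the flow by first rewriting \eqref{eq:PS} as a pure continuity equation and then applying the twisted-Wasserstein analogue of the classical formula for the time derivative of $W_2^2$ along two continuity equations. Using $\pa_v^2 f_t = \pa_v(f_t\,\pa_v\log f_t)$, equation \eqref{eq:PS} takes the form
\[
\pa_t f_t + \mathrm{div}\bigl(f_t(B - (0,\pa_v\log f_t))\bigr) = 0,
\]
with $B(x,v) = (v,-U'(x)-v)$, and similarly for $g_t$. Since $W_A$ coincides with $W_2$ after the linear change of variables $z\mapsto A^{1/2}z$, the standard dissipation formula carries over: with $\mt_t = A^{-1}\nabla\varphi_t$ the $W_A$-optimal map from $g_t$ to $f_t$ furnished by Theorem \ref{thm:Bre},
\[
\tfrac12\tfrac{d}{dt}W_A^2(f_t,g_t) = \int_{\R^2}\bigl\langle \mt_t(z)-z,\,B(\mt_t(z))-B(z) - (0,\pa_v\log f_t(\mt_t(z))) + (0,\pa_v\log g_t(z))\bigr\rangle_A g_t(z)\,dz.
\]
The drift contribution is precisely the first term of $-\JJ_A(f_t|g_t)$, so everything reduces to identifying the diffusive contribution with minus the quadratic term in $\JJ_A$.

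I would treat the two diffusive pieces separately. Using $\langle(u_1,u_2),(0,h)\rangle_A = h(bu_1+cu_2)$ and integrating by parts in $v$, the piece involving $\pa_v\log g_t(z)$ becomes $-\int (b\pa_v\mt_t^1 + c\pa_v\mt_t^2 - c)g_t\,dz$. A direct computation from $\mt_t = A^{-1}\nabla\varphi_t$ and the explicit form of $A^{-1}$ yields the identity $b\pa_v\mt_t^1 + c\pa_v\mt_t^2 = \pa_v^2\varphi_t$ (the $\pa_{x,v}^2\varphi_t$ contributions cancel thanks to the determinantal structure of $A^{-1}$), so this piece reduces to $-\int(\pa_v^2\varphi_t - c)g_t\,dz$. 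For the piece involving $\pa_v\log f_t(\mt_t(z))$ I would perform the change of variables $y = \mt_t(z)$ (so $z = \nabla\varphi_t^{*}(Ay)$ and $g_t(z)dz = f_t(y)dy$ by Theorem \ref{thm:Bre}), integrate by parts in $y_2$, and compute the relevant Jacobian from $D(\nabla\varphi_t^{*}(A\cdot))(y) = (\nabla^2\varphi_t(z))^{-1}A$. Expanding the second column of $(\nabla^2\varphi_t)^{-1}A$ and changing variables back, this piece becomes
\[
-\int\Bigl(c - \tfrac{b^2\pa_v^2\varphi_t - 2bc\,\pa_{x,v}^2\varphi_t + c^2\pa_x^2\varphi_t}{\det\nabla^2\varphi_t}\Bigr)g_t\,dz.
\]

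Summing the two pieces and adding the quadratic term of $\JJ_A$, I would verify by direct algebra that everything cancels. The two decisive identities are the factorization $\bigl(b(\pa_v^2\varphi_t - c) - c(\pa_{x,v}^2\varphi_t - b)\bigr)^2 = (b\pa_v^2\varphi_t - c\pa_{x,v}^2\varphi_t)^2$ and the Monge--Amp\`ere-type relation $\pa_x^2\varphi_t\,\pa_v^2\varphi_t - (\pa_{x,v}^2\varphi_t)^2 = \det\nabla^2\varphi_t > 0$ supplied by Theorem \ref{thm:Bre}(iii); together they collapse the non-determinant and determinant contributions into opposite multiples of $c^2/\pa_v^2\varphi_t$, so that the announced inequality in fact holds as an equality.

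The main technical obstacle I expect is not the algebra but the rigorous justification of the differentiation formula in the twisted-Wasserstein setting together with the integrability needed for the two integrations by parts. This would be dealt with by a standard approximation argument: the $\CC^2$ regularity of $\varphi_t$ supplied by Theorem \ref{thm:Bre}(iii) and the smoothing and decay properties of solutions to \eqref{eq:PS} are precisely what make the manipulations above legitimate.
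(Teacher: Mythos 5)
Your proposal is correct and follows essentially the same computational path as the paper: rewrite the Fokker--Planck equation as a continuity equation with drift $B - (0,\pa_v\log f_t)$, apply a twisted-Wasserstein version of the first-variation formula along the optimal map $A^{-1}\nabla\varphi_t$, and identify the diffusive remainder with the quadratic term in $\JJ_A$ via integrations by parts in $v$ and the relation $\nabla^2\varphi_t^*(\nabla\varphi_t) = (\nabla^2\varphi_t)^{-1}$. Your key algebraic observations --- $b\pa_v\mt_t^1 + c\pa_v\mt_t^2 = \pa_v^2\varphi_t$ (using the determinantal cancellation in $A^{-1}$) and $b(\pa_v^2\varphi_t - c) - c(\pa_{x,v}^2\varphi_t - b) = b\pa_v^2\varphi_t - c\pa_{x,v}^2\varphi_t$ --- are exactly the simplifications the paper performs at the end.

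The two genuine differences are worth noting. First, you differentiate directly along \emph{both} flows by invoking the two-sided time-derivative formula for $W_2^2$ along a pair of continuity equations (transported to $W_A$ via the isometry $z\mapsto A^{1/2}z$); the paper instead first proves, by an explicit characteristics argument with a \emph{frozen} second marginal $\sigma$, the one-sided inequality $\frac12\frac{d}{dt}W_A^2(f_t,\sigma)\le\int A(\mt_t-z)\cdot\xi_t(\mt_t)\,\sigma\,dz$, then specializes to $\sigma = f_\infty$ and \emph{adds a vanishing term} coming from stationarity of $f_\infty$ ($0 = \int A(\mt_t-z)\cdot\xi_\infty\,f_\infty\,dz$) to reconstruct the difference $B(\mt_t)-B(z)$ and the second diffusive contribution. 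Your approach is thus faithful to the statement as written (two general solutions), while the paper proves only the case $g_t = f_\infty$, which is all that is needed downstream. Second, you assert equality, whereas the paper's use of the suboptimal coupling $\Xi_t^{t+h}\circ A^{-1}\nabla\varphi_t$ yields only an upper bound; the paper explicitly remarks that equality would require more care and is not needed. If you intend to keep equality, you should spell out the appeal to the rigorous formula for absolutely continuous curves in $(\PP_2, W_2)$ (\cite[Theorem 23.9]{VilOT}), whose hypotheses are met thanks to the smoothness of $f_t,g_t$ and of $\varphi_t$ from Theorem~\ref{thm:Bre}(iii); otherwise the one-sided argument suffices.
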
	
	This result is no more than a careful application in the kinetic case of classical results (see for instance \cite[Theorem 23.9]{VilOT}). Note that it would be possible to be even more careful and obtain an equality instead of an upper bound, but this is not required for our purpose. \newline
	
	Next we obtain a key functional inequality in the	
\begin{proposition}
	\label{prop:key}
		Assume that $U$ is of the form \eqref{eq:U}, for some $\alpha>0$ and some compactly supported on $[-R,R]$ function $\psi$, satisfying \eqref{eq:cond}. There is a symmetric positive definite matrix $A\in \MM_2(\R)$ and $\kappa>0$, such that for any probability $f\in \PP_2(\R^2)$ with smooth enough density, there holds
	\begin{align*}
	\kappa W_A^2(f,f_\infty)&\leq \JJ_A(f|f_\infty).
	\end{align*}
\end{proposition}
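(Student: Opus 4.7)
The plan is to reduce the inequality to an algebraic estimation over the entries $a,b,c$ of $A$, using the Brenier polarisation of Theorem \ref{thm:Bre} to rewrite both sides in terms of the map $\mt = A^{-1}\nabla\varphi$ pushing $f_\infty$ onto $f$. Writing $\delta = \mt - \mathrm{id}$ and plugging $B(x,v) = (v,\,-\alpha x - \psi'(x) - v)^\top$ into the twisted inner product, a direct computation gives
\begin{align*}
-\langle B(\mt) - B(x,v),\,\delta\rangle_A \;=\; \alpha b(\delta^1)^2 \,+\, (\alpha c + b - a)\,\delta^1\delta^2 \,+\, (c - b)(\delta^2)^2 \,+\, \bigl(\psi'(\mt^1) - \psi'(x)\bigr)\bigl(b\delta^1 + c\delta^2\bigr),
\end{align*}
so the first piece of $\JJ_A$ decomposes as a pointwise quadratic form in $\delta$ plus a perturbation carrying the non-convex part $\psi$.

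I then fix $a = \alpha c + b$, which kills the cross term, and take $b = b_*$, $c = c_*$ from \eqref{eq:cond}; the assumption $c > 2b$ guarantees that $A$ is symmetric positive definite, since $\det A = \alpha c^2 + bc - b^2 > 0$. The remaining quadratic $\alpha b(\delta^1)^2 + (c-b)(\delta^2)^2$ dominates $\kappa_0\,|\delta|_A^2$ for some $\kappa_0 > 0$: the inequality reduces to the discriminant condition $(\alpha b - \kappa_0 a)(c - b - \kappa_0 c) \geq \kappa_0^2 b^2$, which holds for all sufficiently small $\kappa_0$ because both factors on the left are strictly positive. Integrated against $f_\infty$, this pointwise bound produces $2\kappa_0\,W_A^2(f,f_\infty)$ as a leading term on the right-hand side of the target inequality.

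What remains is to absorb the perturbation $\int(\psi'(\mt^1)-\psi'(x))(b\delta^1+c\delta^2)\,f_\infty$. The Taylor bound $|\psi'(\mt^1)-\psi'(x)|\leq \|\psi''\|_{L^\infty}|\delta^1|$ together with Young's inequality recovers a portion of it from the margin $\kappa_0$ created above, but since \eqref{eq:cond} forces $\|\psi''\|_{L^\infty}>\alpha$, this direct absorption is insufficient and the positive second term of $\JJ_A$ must contribute. The key algebraic identity is $b\delta^1 + c\delta^2 = \pa_v\varphi - bx - cv$, together with $\det(A)\,\pa_v\mt^1 = c\,\pa_{x,v}^2\varphi - b\,\pa_v^2\varphi$; integration by parts in $v$ against $f_\infty$ (using $\pa_v f_\infty = -v f_\infty$) then turns $\int \psi'(\mt^1)(b\delta^1 + c\delta^2)\,f_\infty$ into an expression depending only on $\pa_v^2\varphi - c$ and $c\,\pa_{x,v}^2\varphi - b\,\pa_v^2\varphi$, which are precisely the quantities whose weighted $L^2(f_\infty)$-norms are controlled by the second term of $\JJ_A$. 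Two Cauchy-Schwarz / Young applications, using the strict positivity $\pa_v^2\varphi>0$ from Theorem \ref{thm:Bre}(iii) to handle the weight $(\pa_v^2\varphi)^{-1}$, should then close the estimate with a positive $\kappa$.

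The hard part is making all the constants actually positive after closure. The Monge-Ampère factor $\det(\nabla^2\varphi)$ appearing in the denominator of the second term of $\JJ_A$ may degenerate far from equilibrium, and the explicit Gaussian factor $e^{-\alpha(R+2)^2/2}e^{-\|\psi\|_{L^\infty}}$ built into $c_*$ in \eqref{eq:cond} strongly suggests splitting the domain into the ball $|x|\leq R+2$ --- where $\psi$, $\psi'$ and $\psi''$ all actively contribute --- and its complement, where $\psi\equiv 0$ and the standard BGG-type convex argument applies verbatim. Justifying the vanishing of boundary terms in the integration by parts (which requires some decay on $\nabla\varphi$ at infinity inherited from $\mt_\#f_\infty = f$), and verifying that the Gaussian weight defining $c_*$ exactly compensates the worst-case size of the residual perturbation, will be the principal technical obstacle.
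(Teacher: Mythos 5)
Your setup is right up to a point: the decomposition of $-\langle B(\mt)-B(x,v),\delta\rangle_A$ into a convex quadratic form $\alpha b(\delta^1)^2+(c-b)(\delta^2)^2$ (after fixing $a=\alpha c+b$) plus the perturbation $(\psi'(\mt^1)-\psi'(x))(b\delta^1+c\delta^2)$ is exactly the starting identity the paper uses (implicitly, via Lemma~\ref{lem:key1}), and you have correctly identified that the positive second-order term of $\JJ_A$ must compensate for the fact that $\|\psi''\|_{L^\infty}>\alpha$. But the step you propose for closing the estimate --- integrating $\int\psi'(\mt^1)(b\delta^1+c\delta^2)f_\infty$ by parts in $v$ --- does not lead where you want. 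An integration by parts turns a first-order (linear-in-$\delta$) expression into another first-order or derivative expression; it cannot by itself produce the quadratic quantities $(\pa_v^2\varphi-c)^2$ and $(b\pa_v^2\varphi-c\pa_{x,v}^2\varphi)^2$ that constitute the second term of $\JJ_A$. To compare to those squares you must still invoke Cauchy--Schwarz or Young, and at that point you face the problem your outline does not resolve: the factor $|\delta^1|$ inside the perturbation is only controlled if both $x$ and the transported position $\mt^1(x,v)$ are confined to a fixed ball. Splitting on $|x|\leq R+2$ alone, as you suggest, does not achieve this --- the transported point can escape.

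The paper's actual mechanism has three ingredients your proposal does not contain. First, the decomposition of the phase space is done at each fixed $v$ into the sets $\XX_v$ (where both $|x|$ and the transported position $\det(A)^{-1}(c\pa_x\varphi-b\pa_v\varphi)$ are $\leq R+1$) and $\YY_v\subset\XX_v$ (where additionally the position component of $A^{-1}\nabla\varphi-(x,v)$ dominates the velocity component). On the complement, Lemma~\ref{lem:key1} $(i)$--$(ii)$ gives pointwise coercivity with no help from diffusion. Second, on the bad region $\YY_v$, the quantity $\pa_v\varphi(x,v)-(bx+cv)$ is controlled not by integration by parts but by a Taylor expansion \emph{in $x$} from $x$ to an almost-optimal point $x'$ just outside $\YY_v$, producing, after Cauchy--Schwarz, the weighted quantity $\int(\pa_{x,v}^2\varphi-b)^2/\pa_x^2\varphi\,e^{-U}$. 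Third, and most crucially, the matricial inequality of Lemma~\ref{lem:key2},
\[
\Bigl((m_{2,2}-c)^2+\frac{(b(m_{2,2}-c)-c(m_{1,2}-b))^2}{\det M}\Bigr)(m_{2,2})^{-1}\;\geq\;(m_{1,2}-b)^2(m_{1,1})^{-1},
\]
is what identifies that weighted quantity as a lower bound for the second term of $\JJ_A$; without it there is no bridge between what the Taylor remainder gives you and what $\JJ_A$ provides. Your proposal never arrives at this inequality, and the route via $\pa_v$-integration by parts does not generate the mixed-second-derivative combination $(\pa_{x,v}^2\varphi-b)^2/\pa_x^2\varphi$ that Lemma~\ref{lem:key2} is designed to capture. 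Finally, closing with Lemma~\ref{lem:key1} $(iii)$, which needs the extra term $|v_1-v_2|^2$ on the left, requires having already extracted precisely the quantity $|\pa_v\varphi-(bx+cv)|^2$; in your scheme it is unclear how that $|v_1-v_2|^2$ re-enters the coercivity estimate on the bad region. In short, the proposal identifies the right obstruction but misses the key lemma and the specific geometry of the domain decomposition, so as written it does not close.
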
	
This result is greatly inspired by \cite[Proposition 3.4]{BGG1}, where a similar "entropy"-"entropy dissipation" inequality is derived, between the Wasserstein 2 distance to the equilibrium and its dissipation along some (non degenerate) Fokker-Planck equation.\newline 
Let us briefly sketch the proof of this result, which structure might remind acquainted readers the "cascade" structure of the proof of the $L^2$ result.\newline
	\begin{figure}[h]
	\centering
	\includegraphics[scale=0.17]{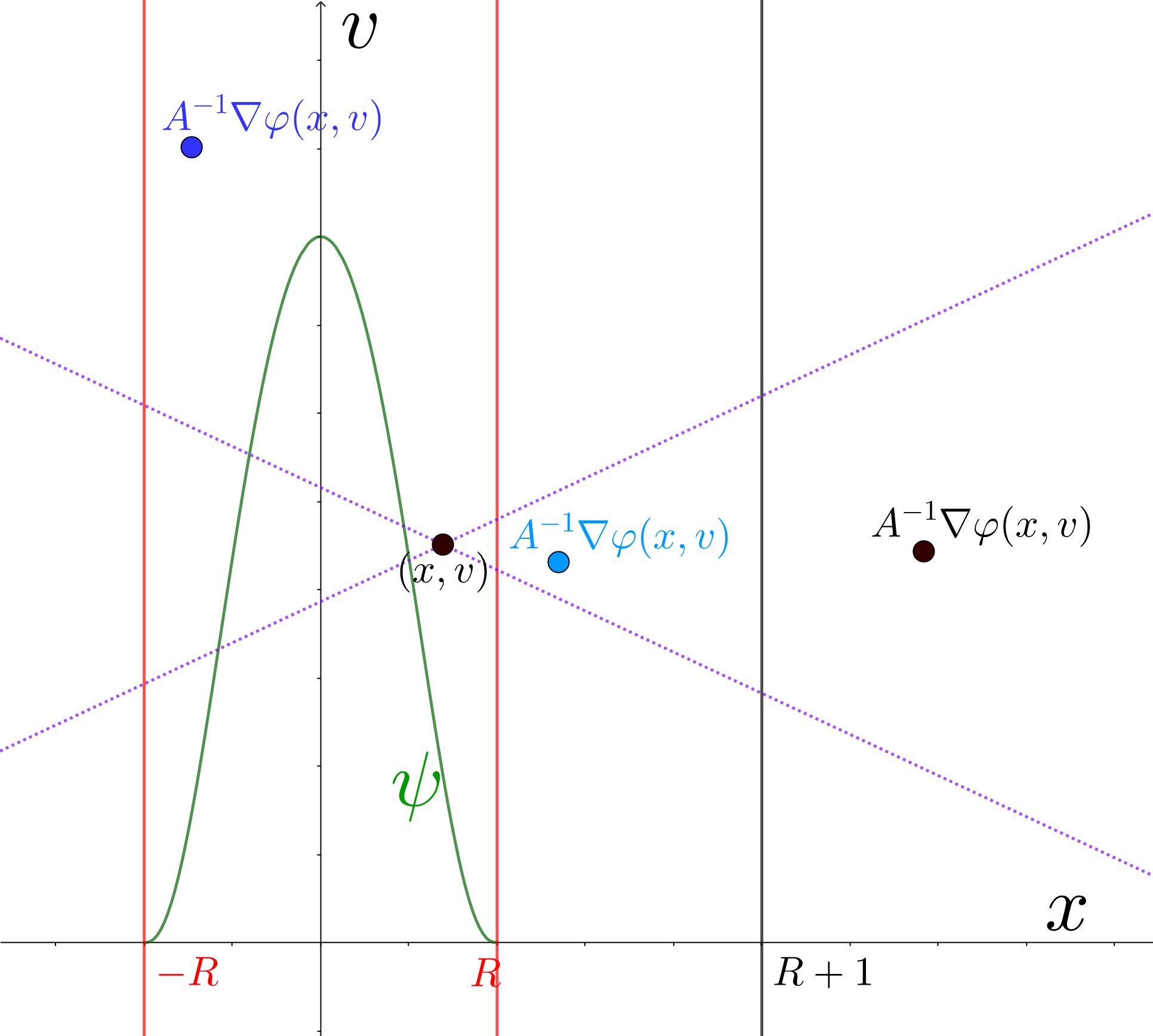}
	\caption{Decomposition of the phase space used in the proof of Proposition \ref{prop:key}}.
	\label{fig1}
\end{figure}
Frictions in velocity always provide some contraction effects in  velocity variable. Due to the local lack of convexity of $U$, we can not always benefit of contraction effects in the position variable. Of course, at $v\in \R$ fixed, if $x$ or the position component of the transported of $(x,v)$ lies far enough of the support of the perturbation $\psi$, contraction effects are obvious. \newline
Problems may arise if these two positions lie in the support of the perturbation. But since in this case, we still enjoy the contraction in velocity, we may solve the issue if the velocity component of transportation vector dominates the position component.\newline
On the remaining case, we crucially rely on the non negative quantity in the definition of $\JJ_A$, involving second order derivatives of $\varphi$. Thanks to a surprising matricial inequality (see Lemma \ref{lem:key2})), a simple geometric interpretation of this quantity is given, and with help of the contraction already found in previous regions, some contraction effects in the position variable are derived on the set of non convexity of $U$. We emphasize that, the quantity involving second order derivative in $\JJ_A$ is obtained because on the one hand we rely on the optimal transport map for quadratic cost function, thus admitting some gradient structure by Brenier's Theorem, and on the other hand by taking into account the effects of the diffusion in velocity. And optimal transport theory and hypocoercivity theory are mixed in some sense. \newline

A simple application of these two propositions, together with Gronwall's inequality yields to the
\begin{theorem}
	\label{thm:main1}
	Assume that $U$ is of the form \eqref{eq:U}, for some $\alpha>0$ and some compactly supported on $[-R,R]$ function $\psi$, satisfying \eqref{eq:cond}. For some $c\in (2b_*,c_*)$, let
	\[
	A=c\begin{pmatrix}
	\alpha+\frac{1}{2}&\frac{1}{2}  \\ 
\frac{1}{2}	& 1
	\end{pmatrix} 
	\]
	 There is $\kappa>0$ depending only on $U$ such that for any $f_0\in \PP_2(\R^2)$ it holds for any $t>0$ 
	\[
	W_A(f_t,f_\infty)\leq e^{-\kappa t} 	W_A(f_0,f_\infty),
	\]
	where $(f_t)_{t\geq 0}$ is the solution to \eqref{eq:PS} starting from $f_0$.
\end{theorem}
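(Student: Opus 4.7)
The plan is to combine Propositions \ref{prop:diss} and \ref{prop:key} in the spirit of a Grönwall argument, taking $g_t \equiv f_\infty$ as the second (stationary) solution. First I would observe that $f_\infty$ as defined is indeed a stationary solution of \eqref{eq:PS}: direct substitution shows $v \pa_x f_\infty - U'(x) \pa_v f_\infty = \pa_v(v f_\infty + \pa_v f_\infty) = 0$. Hence Proposition \ref{prop:diss}, applied with $g_t = f_\infty$ for all $t$, gives
\[
\frac{1}{2}\frac{d}{dt} W_A^2(f_t, f_\infty) \leq -\JJ_A(f_t | f_\infty)
\]
for each $t > 0$, provided $f_t$ has a sufficiently smooth positive density so that Theorem \ref{thm:Bre} applies and the transport map is regular enough to differentiate under the integral sign.

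Next, I would verify that the matrix $A$ specified in the statement satisfies the hypotheses of Proposition \ref{prop:key}: the choice $c \in (2b_*, c_*)$ is made exactly so that this proposition applies (in particular, the positive definiteness of $A$ follows from $c > 0$ and the fact that $\det A = c^2(\alpha + 1/2) - c^2/4 = c^2(\alpha + 1/4) > 0$). Thus there exists $\kappa_0 > 0$, depending only on $U$, such that
\[
\kappa_0 W_A^2(f_t, f_\infty) \leq \JJ_A(f_t | f_\infty).
\]
Chaining this with the previous inequality yields
\[
\frac{d}{dt} W_A^2(f_t, f_\infty) \leq -2\kappa_0 W_A^2(f_t, f_\infty),
\]
and Grönwall's lemma gives $W_A^2(f_t, f_\infty) \leq e^{-2\kappa_0 t} W_A^2(f_0, f_\infty)$, i.e.\ the claimed estimate with $\kappa = \kappa_0$.

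The only genuine obstacle is the regularity requirement: Proposition \ref{prop:key} is stated for $f$ with a smooth enough density, while the theorem is stated for arbitrary $f_0 \in \PP_2(\R^2)$. The standard way around this is a density/regularization argument: the hypoelliptic smoothing of \eqref{eq:PS} ensures that for any $t_0 > 0$, $f_{t_0}$ has a strictly positive $\CC^\infty$ density, so the differential inequality holds on $[t_0, \infty)$ and yields $W_A(f_t, f_\infty) \leq e^{-\kappa(t - t_0)} W_A(f_{t_0}, f_\infty)$. One then lets $t_0 \to 0$, invoking lower semicontinuity of $W_A$ (which follows from its equivalence with $W_2$ via Proposition \ref{prop:equ}) and the continuity in $W_2$ of $t \mapsto f_t$ near $t=0$, to conclude. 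Apart from this technical point, the proof is a one-line consequence of the two preceding propositions.
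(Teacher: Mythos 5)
Your proof is correct and follows exactly the route the paper takes: the paper itself states before the theorem that it is ``a simple application of these two propositions, together with Gronwall's inequality,'' taking $g_t \equiv f_\infty$. Your additional remarks — verifying that $f_\infty$ is stationary, that the prescribed $A$ is positive definite (indeed $\det A = c^2(\alpha + 1/4) > 0$) and of the form $\begin{pmatrix} b + c\alpha & b \\ b & c \end{pmatrix}$ with $c = 2b$ as required in the proof of Proposition~\ref{prop:key}, and the regularization argument sending $t_0 \to 0$ to cover arbitrary $f_0 \in \PP_2(\R^2)$ — are all accurate and fill in technical points the paper leaves implicit.
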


\subsection{Discussion}

We make here some comments about the main result of the paper. \newline

A first simple consequence of Theorem \ref{thm:main1} and Proposition \ref{prop:equ}, is that there is a constant $C_\alpha>1$ such that
\bq
\label{eq:cor}
W_2(f_t,f_\infty)\leq C_\alpha e^{-\kappa t} W_2(f_0,f_\infty).
\eq
In \cite{DET}, the authors prove that on a torus in position with no confinement, such an inequality can not hold with $C_\alpha=1$ and is obtained for some $C_\alpha>1$. It could be of some interests to investigate whether the techniques developed here extend to the case of a torus, to obtain a (slightly) stronger contraction in an equivalent Wasserstein metric.  \newline

It is important to note that Theorem \ref{thm:main1} only provides stability around equilibrium. It is a consequence of the fact that our analysis (more precisely Proposition \ref{prop:key}) strongly relies on the explicit knowledge of the density of the stationary measure. As noted in the introduction, a consequence of \cite{BGGKin} is that, under the assumption of strict convexity of $U$, there is a symmetric positive definite matrix $A$ and $\kappa>0$ such that for any $f_0,g_0$
\[
W_A(f_t,g_t)\leq e^{-\kappa t}W_A(f_0,g_0).
\]
For a non degenerated diffusion, it is well known that the strict convexity of the potential is a necessary and sufficient condition for a contraction inequality in usual Wasserstein-2 metric (see \cite[Remark 3.6]{BGG1}). Therefore it could be interesting to wonder if the converse to the result of Bolley et al is true, that is whether there can exist a matrix $A$ such that the kinetic Fokker-Planck equation is a contractive in $W_A$ distance, only if the potential $U$ is strictly convex.   \newline

Under assumption \eqref{eq:cond}, it is clear in view of \cite[Theorem 9.9]{VilOT}, that $f_\infty$ satisfies a $(1\wedge \alpha e^{-2\|\psi\|_{L^\infty}})$-log-Sobolev inequality (as it is some bounded perturbation of some strictly convex function). Hence for any $f_0\in \PP_2(\R^2)$ there holds
\[
	W_2^2(f_0,f_\infty)\leq   	\frac {2}{1\wedge \alpha e^{-2\|\psi\|_{L^\infty}}}\HH(f_0|f_\infty).
\]
On the other hand, we can use some regularization result (see for instance \cite[Proposition 15]{GM}), to obtain some constant $C_U>0$ depending only on $U$ such that for any $t_0\in (0,1)$ and $t>t_0$,
\begin{align*}
\HH(f_t|f_\infty)&\leq C_U t_0^{-3}W^2_2(f_{t-t_0},f_\infty).
\end{align*}
Using then together \eqref{eq:cor} and the above remark yields
\begin{align*}
\HH(f_t|f_\infty)&\leq C_{U}C_\alpha t_0^{-3} e^{-2\kappa(t-t_0)}W^2_2(f_{0},f_\infty)\\
& \leq C_U\frac{e^{-2\kappa(t-t_0)}}{t_0^3}\HH(f_{0}|f_\infty),
\end{align*}
which is \eqref{eq:HVill} (up to the value of the constants on which we will not comment here). Therefore Theorem \ref{thm:main1} is in some sense "stronger" than \cite[Theorem 39]{VilHypo} for the specific case on which we are focusing. The relation between these results may yield to think that there is here some common structure to understand, as indicated in \cite[Chapter 6]{VilHypo}. \newline

Finally, it can safely by conjectured that under assumption \eqref{eq:cond}, $(e^{-U},U')$ satisfies a WJ inequality in the sense of \cite[Definition 3.1]{BGG1}. The result can be obtained either by checking that assumption \eqref{eq:cond} implies the sufficient condition of \cite[Proposition 3.8]{BGG1}, either by straightforward computations. It could be interesting to obtain Theorem \ref{thm:main1} under the assumption that $(e^{-U},U')$ satisfies a WJ inequality, in the vein of the entropy method of \cite[Theorem 37]{VilHypo} which is based on the assumption that $e^{-U}$ satisfies a log-Sobolev inequality, or the $L^2$ method based on Poincaré inequality. Another possible direction of investigation, would be to obtain a more abstract and general result, based on commutator theory, in the spirit of \cite[Theorem 28]{VilHypo}.\newline
These questions are delayed to some future works, as is the possible extension to larger dimension (which seems mostly technical), or the extension to non-linearity and particles systems.

	\section{Proof of Theorem \ref{thm:main1}}

\subsection{Proof of Proposition \ref{prop:diss}}

Let $f_0\in \PP_2(\R^2)$ and $(f_t)_{t\geq 0}$ be the unique solution to \eqref{eq:PS} for the initial condition $f_0$. In view of \cite[Theorem A.15, Theorem A.19]{VilHypo} it admits for any $t>0$ a smooth non negative density. Therefore we may take the logarithm, and observe that, actually $(f_t)_{t\geq 0}$ solves the transport equation
\bq
\label{eq:trans}
\pa_t f_t+\nabla_{x,v}\cdot \lt( \xi_t  f_t \rt)=0,
\eq
with advection field
\[
\xi_t(x,v):=\begin{pmatrix}
v\\ 
- v-U'(x) -\pa_v \ln f_t(x,v)
\end{pmatrix}= B(x,v)- \begin{pmatrix}
0\\ 
\pa_v \ln f_t(x,v).
\end{pmatrix}
\]

$\diamond$ Step one : \newline

We first check that for any $\sigma\in \PP_2(\R^2)$ there holds
\begin{align*}
\frac{1}{2}\frac{d}{dt}W_A^2(f_t,\sigma)&\leq \int_{\R^{2}}A(A^{-1}\nabla \varphi_t(x,v)-(x,v))\cdot \xi_t(A^{-1}\nabla \varphi_t(x,v) )\sigma(x,v)dxdv.\\
&=\int_{\R^{2}}A(A^{-1}\nabla \varphi_t(x,v)-(x,v))\cdot B(A^{-1}\nabla \varphi_t(x,v) )\sigma(x,v)dxdv\\
&-\int_{\R^{2}}A(A^{-1}\nabla \varphi_t(x,v)-(x,v))\cdot\begin{pmatrix}
0\\ 
-\pa_v \ln f_t(A^{-1}\nabla \varphi_t (A(x,v))
\end{pmatrix} \sigma(x,v)dxdv\\
&=I_1-I_2.
\end{align*}
Indeed, let $t>h>0$ and denote $\varphi_{t}$ the unique (up to an additive constant) convex potential such that $A^{-1}\nabla \varphi_t$ transport $\sigma$ onto $f_t$ optimally w.r.t. $W_A$, so that
\[
W_A^2(f_t,\sigma)=\int_{\R^2} |(x,v)-A^{-1}\nabla \varphi_t(x,v)|_A^2\sigma(dx,dv).
\]
We denote $\Xi_t^{t+h}(z)$ the solution at time $t+h$ to the ODE
\[
\frac{d}{ds} \Xi_t^{t+s}=\xi_s(\Xi_t^{t+s}), \ \Xi_t^{t}(z)=z,
\]
so that $\Xi_t^{t+h}\circ A^{-1}\nabla \varphi_t$ transports, not optimally, $\sigma$ onto $f_{t+h}$ and 
\[
\frac{W_A^2(f_{t+h},\sigma)-W_A^2(f_t,\sigma)}{2h}\leq \int_{\R^2} \frac{|(x,v)-\Xi_t^{t+h}\circ A^{-1}\nabla \varphi_t(x,v)|_A^2-|(x,v)-A^{-1}\nabla \varphi_t(x,v)|_A^2}{2h}\sigma(dx,dv).
\]
We conclude this step by observing that by definition of $\Xi_t^{t+h}$ there holds
\[
\lim_{h\rightarrow 0} \frac{|(x,v)-\Xi_t^{t+h}\circ A^{-1}\nabla \varphi_t(x,v)|_A^2-|(x,v)-A^{-1}\nabla \varphi_t(x,v)|_A^2}{2h}=A(A^{-1}\nabla \varphi_t(x,v)-(x,v))\cdot \xi_t(A^{-1}\nabla \varphi_t(x,v) ).
\]

$\diamond$ Step two : \newline
We choose $\sigma=f_{\infty}$. Since for any $(x,v)$ there holds 
\bq
\label{eq:condinv}
(x,v)=A^{-1}\nabla \varphi_t(\nabla \varphi^*_t(A(x,v)),
\eq
by point $(iii)$ of Theorem \ref{thm:Bre} and since $A^{-1}\nabla \varphi_t$ transports $f_{\infty}$ to $f_t$, we have
\begin{align*}
I_2=&\int_{\R^{2}}A((x,v)-\nabla \varphi_t^*(A(x,v)))\cdot\begin{pmatrix}
0\\ 
-\pa_v \ln f_t(x,v)
\end{pmatrix} f_t(x,v)dxdv=\\
=&\int_{\R^{2}}((x,v)-\nabla \varphi_t^*(A(x,v)))\cdot\begin{pmatrix}
-b\pa_v \ln f_t(x,v)\\ 
-c\pa_v \ln f_t(x,v)
\end{pmatrix} f_t(x,v)dxdv\\
=&\int_{\R^{2}}-b(x-\pa_x \varphi_t^*(A(x,v)))\pa_v f_t(x,v)dxdv+\int_{\R^{2}}-c(v-\pa_v \varphi_t^*(A(x,v)))\pa_v f_t(x,v)dxdv.\\
\end{align*}
Then integrating each term by parts, using \eqref{eq:condinv}
 and the fact that $\nabla \varphi^*_t(A(x,v))$ transports $f_t$ onto $f_\infty$  yields
\begin{align*}
I_2&=\int_{\R^{2}}\lt(-b^2\pa^2_x \varphi_t^*(A(x,v)) -cb\pa^2_{v,x} \varphi_t^*(A(x,v)) \rt) f_t(x,v)dxdv\\
&+\int_{\R^{2}}c(1-b\pa^2_{x,v} \varphi_t^*(A(x,v)) -c\pa^2_v \varphi_t^*(A(x,v))) f_t(x,v)dxdv\\
&=\int_{\R^{2}}\lt(-b^2\pa^2_x \varphi_t^*(AA^{-1}\nabla \varphi_t(\nabla \varphi^*_t(A(x,v))) -cb\pa^2_{v,x} \varphi_t^*(AA^{-1}\nabla \varphi_t(\nabla \varphi^*_t(A(x,v))) \rt) f_t(x,v)dxdv\\
&+\int_{\R^{2}}c(1-b\pa^2_{x,v} \varphi_t^*(AA^{-1}\nabla \varphi_t(\nabla \varphi^*_t(A(x,v))) -c\pa^2_v \varphi_t^*(AA^{-1}\nabla \varphi_t(\nabla \varphi^*_t(A(x,v)))) f_t(x,v)dxdv\\
&=\int_{\R^{2}}\lt(-b^2\pa^2_x \varphi_t^*(\nabla \varphi_t(x,v)) -cb\pa^2_{v,x} \varphi_t^*(\nabla \varphi_t(x,v))\rt) f_\infty(x,v)dxdv\\
&+\int_{\R^{2}}c(1-b\pa^2_{x,v} \varphi_t^*(\nabla \varphi_t(x,v)) -c\pa^2_v \varphi_t^*(\nabla \varphi_t(x,v))) f_\infty(x,v)dxdv.
\end{align*}
By point $(iii)$ of Theorem \ref{thm:Bre}, $\varphi_t$ is $\CC^2$ and $\nabla \varphi_t^*(\nabla \varphi_t(x,v))=(x,v)$ we obtain by differentiation
\[
\nabla^2 \varphi^*_t(\nabla \varphi_t(x,v))=\lt( \nabla^2 \varphi_t(x,v) \rt)^{-1}=\begin{pmatrix}
 \pa_x^2 \varphi_t& \pa_{x,v}^2 \varphi_t\\ 
\pa_{v,x}^2 \varphi_t & \pa_{v}^2 \varphi_t
\end{pmatrix}^{-1}= \frac{1}{\det(\nabla^2 \varphi_t(x,v))} \begin{pmatrix}
 \pa_v^2 \varphi_t& -\pa_{x,v}^2 \varphi_t\\ 
-\pa_{v,x}^2 \varphi_t & \pa_{x}^2 \varphi_t
\end{pmatrix}.
\]
Therefore, identifying term by term yields
\begin{align*}
I_2=- \int_{\R^{2}} \frac{1}{\det(\nabla^2 \varphi_t(x,v))}   \lt(b^2\pa^2_v \varphi_t(x,v) -2cb\pa^2_{x,v} \varphi_t(x,v)+c^2\pa^2_x \varphi_t(x,v)-c\rt) f_\infty(x,v)dxdv.
\end{align*}

$\diamond$ Step three : \newline

Next observe that since $f_\infty$ is a stationary solution to \eqref{eq:trans}, we easily find that for any $t>0$
\begin{align*}
0&= \int_{\R^{2}}|A^{-1}\nabla \varphi_t(x,v)-(x,v)|^2_A\nabla_{x,v}\cdot \lt(\xi_\infty f_\infty\rt)dxdv\\
&= \int_{\R^{2}}A(A^{-1}\nabla \varphi_t(x,v)-(x,v))\cdot \xi_\infty(x,v)f_\infty(x,v)dxdv.\\
&=\int_{\R^{2}}A(A^{-1}\nabla \varphi_t(x,v)-(x,v))\cdot B(x,v)f_\infty(x,v)dxdv\\
&-\int_{\R^{2}}A(A^{-1}\nabla \varphi_t(x,v)-(x,v))\cdot\begin{pmatrix}
0\\ 
-\pa_v \ln f_\infty(x,v)
\end{pmatrix} f_\infty(x,v)dxdv\\
&=J_1-J_2.
\end{align*}
By integration by parts, we obtain
\begin{align*}
J_2&=-\int_{\R^2}(\pa_v \varphi(x,v)-(bx+cv))\pa_v f_\infty(x,v)dxdv\\
&=-\int_{\R^2}(c -\pa^2_v \varphi(x,v)) f_\infty(x,v)dxdv. 
\end{align*}

$\diamond$ Step three : \newline
Gathering all the above results we have

\begin{align*}
\frac{1}{2}\frac{d}{dt}W_A^2(f_t,f_\infty)&\leq\int_{\R^{2}}A(A^{-1}\nabla \varphi_t(x,v)-(x,v))\cdot \lt(B(A^{-1}\nabla \varphi_t(x,v)-B(x,v)\rt) )f_\infty(x,v)dxdv\\
&+\int_{\R^{2}} \frac{1}{\det(\nabla^2 \varphi_t(x,v))}   \lt(b^2\pa^2_v \varphi_t(x,v) -2cb\pa^2_{x,v} \varphi_t(x,v)+c^2\pa^2_x \varphi_t(x,v)-c\rt) f_\infty(x,v)dxdv\\
&+\int_{\R^2}(\pa^2_v \varphi(x,v)-c) f_\infty(x,v)dxdv.
\end{align*}
Now, note that
\begin{align*}
\frac{c^2\pa^2_x \varphi_t}{\det(\nabla^2 \varphi_t)}  -\frac{c^2}{\pa^2_v \varphi_t} &=c^2 \lt(  \frac{1}{\pa^2_v \varphi_t-\frac{(\pa^2_{x,v} \varphi_t)^2}{\pa^2_x \varphi_t}   } -\frac{1}{\pa^2_v \varphi_t}\rt)\\
&=c^2\frac{\frac{(\pa^2_{x,v} \varphi_t)^2}{\pa^2_x \varphi_t}  }{\pa^2_v \varphi_t \lt(\pa^2_v \varphi_t-\frac{(\pa^2_{x,v} \varphi_t)^2}{\pa^2_x \varphi_t}  \rt) }\\
&=c^2\frac{ (\pa^2_{x,v} \varphi_t)^2  }{\pa^2_v \varphi_t \det(\nabla^2 \varphi_t))  },
\end{align*}
and the result is proved since
	\begin{align*}
	&\frac{1}{\det(\nabla^2 \varphi_t)}   \lt(b^2\pa^2_v \varphi_t -2cb\pa^2_{x,v} \varphi_t+c^2\pa^2_x \varphi_t-c\rt) + (\pa^2_v \varphi_t(x,v)-c)\\
	&=\pa^2_v \varphi_t(x,v)-2c+ \frac{c^2}{\pa^2_v \varphi_t}+ \frac{c^2\pa^2_x \varphi_t}{\det(\nabla^2 \varphi_t)}  -\frac{c^2}{\pa^2_v \varphi_t}+\frac{b^2\pa^2_v \varphi_t -2cb\pa^2_{x,v} \varphi_t}{\det(\nabla^2 \varphi_t)}   \\
	&= \lt( \pa_v^2\varphi_t-c   \rt)^2(\pa_v^2\varphi_t)^{-1}+\frac{c^2\frac{ (\pa^2_{x,v} \varphi_t)^2  }{\pa^2_v \varphi_t   }+b^2\pa^2_v \varphi_t -2cb\pa^2_{x,v} \varphi_t}{\det(\nabla^2 \varphi_t)} \\
&= \lt( \pa_v^2\varphi_t-c   \rt)^2(\pa_v^2\varphi_t)^{-1}+\frac{1}{\det(\nabla^2 \varphi_t)} \lt( b (\pa_v^2 \varphi_t)^{1/2}-c  \frac{ \pa^2_{v,x} \varphi_t }{\pa^2_v \varphi_t^{1/2}}  \rt)^2\\
&=(\pa_v^2\varphi(x,v))^{-1} \lt( \lt( \pa_v^2\varphi-c   \rt)^2+\frac{\lt(b \pa_v^2\varphi-c  \pa_{x,v}^2\varphi  \rt)^2}{\det(\nabla^2 \varphi)} \rt).
	\end{align*}

\subsection{Proof of Proposition \ref{prop:key}}

	We choose $A$ of the form
	\[
	A=\begin{pmatrix} b+c\alpha
	& b \\ 
	b & c
	\end{pmatrix},
	\]
	for $c=2b\in (2b_*,c_*)$. For $v\in \R$ we define
	\begin{align*}
	&\XX_v:=\Bigl\{x\in \R, \  \ |x|\leq R+1, \det(A)^{-1}|c\pa_x \varphi(x,v)-b\pa_v\varphi(x,v)|\leq R+1\Bigr\}\\
	&\YY_v:=\Bigl\{x \in \XX_v, \ |\det(A)^{-1}(c\pa_x \varphi(x,v)-b\pa_v\varphi(x,v))-x|\geq  |\det(A)^{-1}((b+c\alpha)\pa_v \varphi(x,v)-b\pa_x\varphi(x,v))-v| \Bigr\}.
	\end{align*}
	We decompose	
	\begin{align*}
		&-\int_{\R}\lal B(A^{-1}\nabla \varphi(x,v))- B(x,v),A^{-1}\nabla \varphi(x,v)-(x,v) \ral_Ae^{-U(x)}dx=\\
		&-\int_{\R\setminus \XX_v \cup \XX_v\setminus \YY_v \cup \YY_v}\lal B(A^{-1}\nabla \varphi(x,v))- B(x,v),A^{-1}\nabla \varphi(x,v)-(x,v) \ral_A e^{-U(x)}dx\\
		&:=\II_1+\II_2+\II_3
	\end{align*}	
	
	$\diamond$ Estimate of $\II_1$ and $\II_2$: \newline
	
	Using $(i)$ and $(ii)$ of Lemma \ref{lem:key1} with $z_1=\nabla \varphi(x,v)$ and $z_2=A\begin{pmatrix}
	x\\ 
	v
\end{pmatrix}$ we have that for any $x\in \R\setminus \YY_v$
	\[
	-\int_{\R\setminus \YY_v}\lal B(A^{-1}\nabla \varphi(x,v))- B(x,v),A^{-1}\nabla \varphi(x,v)-(x,v) \ral_A e^{-U(x)}dx \geq c\min(\kappa_1,\kappa_2) \int_{\R\setminus \YY_v} |A^{-1}\nabla \varphi(x,v)-(x,v)|^2e^{-U(x)}dx
	\]
	$\diamond$ Estimate of $\II_3$: \newline
 	For each $v\in \R$ we define	
	\[
	x_v^+=\sup \YY_v,
	\]
	and $x'\in \lt[	x_v^+,R+2\rt]$ (which is not necessarily unique) such that
	\[
	\lt|A^{-1}\nabla \varphi(x',v)-(x',v)\rt|=\inf_{y\in \lt[	x_v^+,R+2\rt]}\lt|A^{-1}\nabla \varphi(y,v)-(y,v)\rt|.
	\]
	For any $x\in \YY_v$, there holds by Taylor's expansion	
	\begin{align*}
	\pa_v \varphi(x,v)-(bx+cv)=\pa_v \varphi(x',v)-(bx'+cv)+\int_0^1 (\pa_{x,v}^2\varphi-b)(x_s,v) (x-x') ds,
	\end{align*}
	where $x_s=sx+(1-s)x'$. Therefore
	\begin{align*}
		|	\pa_v \varphi(x,v)-(bx+cv)|^2&\leq 2|	\pa_v \varphi(x',v)-(bx'+cv)|^2+2\lt|\int_0^1 (\pa_{x,v}^2\varphi-b)(x_s,v) (x-x') ds\rt|^2:=K_1+K_2,
	\end{align*}
$\bullet$ Estimate of $K_1$ : \newline\\
Since for any $z\in \R^2$ we have
\[
|Az|^2\leq \rho(A)|z|^2_A\leq \rho(A)^2 |z|^2,
\]
we obtain by definition of $x',x_v^+$
\begin{align*}
|\pa_v \varphi(x',v)-(bx'+cv)|^2&\leq 	\lt|\nabla \varphi(x',v)-A(x',v)\rt|^2=\lt|A\lt(A^{-1}\nabla \varphi(x',v)-(x',v)\rt)\rt|^2\\
&\leq \rho(A)^2\lt|A^{-1}\nabla \varphi(x',v)-(x',v)\rt|^2\\
& \leq \rho(A)^2\int_{R+1}^{R+2}  \lt|A^{-1}\nabla \varphi(x,v)-(x,v)\rt|^2 dx\\
&\leq  \rho(A)^2e^{\sup{y\in \lt[	R+1,R+2\rt]}U(y)}\int_{R+1}^{R+2} \lt|A^{-1}\nabla \varphi(x,v)-(x,v)\rt|^2 e^{-U(x)}dx\\
&\leq \rho(A)^2e^{ \frac{\alpha}{2} \lt(R+2\rt)^2}e^{\|\psi\|_{L^\infty}}\int_{\R\setminus \YY_v} \lt|A^{-1}\nabla \varphi(x,v)-(x,v)\rt|^2 e^{-U(x)}dx
\end{align*}
$\bullet$ Estimate of $K_2$ : \newline\\
By Cauchy-Schwarz's inequality we have 
\begin{align*}
		\lt|\int_0^1 (\pa_{x,v}^2\varphi-b)(x_s,v) (x-x') ds\rt|^2		&=\lt|\int_0^1\frac{\pa_{x,v}^2\varphi-b }{\pa_x^2\varphi^{1/2}}(x_s,v)e^{-\frac{U(x_s)}{2}} e^{\frac{U(x_s)}{2}}(\pa_x^2\varphi(x_s,v))^{1/2}(x-x') ds\rt|^2\\
		&\leq \lt(\int_0^1\frac{(\pa_{x,v}^2\varphi-b)^2}{\pa_x^2\varphi}(x_s,v)e^{-U(x_s)} ds\rt) \lt(\int_0^1\pa_x^2\varphi(x_s,v)e^{U(x_s)}(x-x')^2 ds\rt).
	\end{align*}
	 Since $x,x'\in \lt[-\lt(R+2\rt);\lt(R+2\rt)\rt]$ we have	
	\begin{align*}
		&\int_0^1\pa_x^2\varphi(x_s,v)e^{U(x_s)}(x-x')^2 ds\leq e^{ \frac{\alpha}{2} \lt(R+2\rt)^2}e^{\|\psi\|_{L^\infty}}(\pa_x \varphi(x,v)-\pa_x \varphi(x',v))(x-x').
	\end{align*}
		Next observe that
	\begin{align*}
		\lt|\pa_x \varphi(x,v)-\pa_x \varphi(x',v)\rt|&
		=\lt|(\pa_x \varphi(x,v)-(bx+cv))-(\pa_x \varphi(x',v)-(b x'+cv))+b(x'-x)\rt|\\
		&\leq \lt|\nabla \varphi(x,v)-A(x,v)\rt|+\lt|\nabla \varphi(x',v)-A(x',v)\rt| + b(|x|+|x'|) \\
		&\leq \rho(A)\lt( \lt|A^{-1}\nabla \varphi(x,v)-(x,v)\rt|+\lt|A^{-1}\nabla \varphi(x',v)-(x',v)\rt|  \rt)+b(|x|+|x'|)\\
		&\leq    \rho(A)\lt( \lt|A^{-1}\nabla \varphi(x,v)-(x,v)\rt|+\lt|A^{-1}\nabla \varphi(x^*_v,v)-(x^*_v,v)\rt|  \rt)  +b \lt(R+2\rt).
	\end{align*}	
But since $x\in \YY_v$ we have
	\begin{align*}
	\lt|A^{-1}\nabla \varphi(x,v)-(x,v)\rt|&=\sqrt{\lt(\det(A)^{-1}(c\pa_x \varphi-b \pa_v \varphi) -x\rt)^2+\lt(\det(A)^{-1}((b+c\alpha)\pa_v \varphi-b \pa_x \varphi) -v\rt)^2}\\
	& \leq \lt|c\pa_x \varphi-b\pa_v \varphi -x\rt|\sqrt{2}\leq \lt(R+2\rt) \sqrt{2},
\end{align*}	
and by definition of $x^*_v$  we similarly obtain
\[
\lt|A^{-1}\nabla \varphi(x^*_v,v)-(x^*_v,v)\rt|\leq \lt(R+2\rt) \sqrt{2}.
\]
Gathering all these estimates yields 
\begin{align*}
\int_0^1\pa_x^2\varphi(x_s,v)e^{U(x_s)}(x-x')^2 ds&\leq 2 c  e^{ \frac{\alpha}{2} \lt(R+2\rt)^2}e^{\|\psi\|_{L^\infty}} \lt(R+2\rt)^2\lt( 2\frac{\rho(A)}{c}\sqrt{2}  +\frac{b}{c}  \rt)\\
\end{align*}
Therefore for any $x\in \YY_v$ there holds
	\begin{align*}
	&|\pa_v \varphi(x,v)-(bx+cv)|^2\leq 2c^{2}\lt(\frac{\rho(A)}{c}\rt)^2e^{ \frac{\alpha}{2} \lt(R+2\rt)^2}e^{\|\psi\|_{L^\infty}} \int_{\R\YY_v} \lt|A^{-1}\nabla \varphi(y,v)-(y,v)\rt|^2 e^{-U(y)}dy\\
	&+  2 c  e^{ \frac{\alpha}{2} \lt(R+2\rt)^2}e^{\|\psi\|_{L^\infty}} \lt(R+2\rt)^2\lt( 2\frac{\rho(A)}{c}\sqrt{2}  +\frac{b}{c}  \rt) \lt(\int_\R\frac{(\pa_{x,v}^2\varphi-b)^2}{\pa_x^2\varphi}(y,v)e^{-U(y)} dy\rt)\\
	&=:c^2 C_1 \int_{\R\setminus \YY_v} \lt|A^{-1}\nabla \varphi(y,v)-(y,v)\rt|^2 e^{-U(y)}dy+c C_2  \int_\R\frac{(\pa_{x,v}^2\varphi-b)^2}{\pa_x^2\varphi}(y,v)e^{-U(y)} dy
\end{align*}
Hence integrating over $\YY_v$ and using Lemma \ref{lem:key2}, we obtain
\begin{align*}
\int_{\YY_v}|\pa_v \varphi(x,v)-(bx+cv)|^2e^{-U(x)}dx\leq c^2 C_1\int_{\R\setminus \YY_v}\lt|A^{-1}\nabla \varphi(y,v)-(y,v)\rt|^2 e^{-U(y)}dy&\\
+ c C_2 \int_\R(\pa_v^2\varphi)^{-1} \lt( \lt( \pa_v^2\varphi-c   \rt)^2+\frac{\lt(b \pa_v^2\varphi-c \pa_{x,v} \varphi \rt)^2}{\det(\nabla^2 \varphi)} \rt)(x,v)e^{-U(x)} dx.
\end{align*}

	$\diamond$ Conclusion : \newline
	By definition of $\JJ_A$ we have
		\begin{align*}
	Z\JJ_A(f|f_\infty&)=\int_{\R} \lt(\int_{\R\setminus \YY_v \cup \YY_v} -\lal B(A^{-1}\nabla \varphi(x,v))- B(x,v),A^{-1}\nabla \varphi(x,v)-(x,v) \ral_A  e^{-U(x)}dx\rt)e^{-\frac{|v|^2}{2}}dv\\
	&+\int_{\R}\lt(\int_{\R} (\pa_v^2\varphi)^{-1} \lt( \lt( \pa_v^2\varphi-c   \rt)^2+\frac{\lt(b \lt( \pa_v^2\varphi-c   \rt)-c \lt( \pa_{x,v}^2\varphi-b  \rt)  \rt)^2}{\det(\nabla^2 \varphi)} \rt) e^{-U(x)}dx\rt)e^{-\frac{|v|^2}{2}}dv.\\
	\end{align*}	
	Using the above steps yields	
	\begin{align*}
	Z\JJ_A(f|f_\infty)&\geq  c \frac{(\kappa_1\wedge \kappa_2)}{2} \int_{\R} \lt(  \int_{\R\setminus \YY_v} |A^{-1}\nabla \varphi(x,v)-(x,v)|^2 e^{-U(x)}dx\rt)e^{-\frac{|v|^2}{2}}dv\\
	&+c \frac{(\kappa_1\wedge \kappa_2)}{2} \int_{\R} \lt(  \int_{\R\setminus \YY_v} |A^{-1}\nabla \varphi(x,v)-(x,v)|^2 e^{-U(x)}dx\rt)e^{-\frac{|v|^2}{2}}dv\\
	&+\int_{\R}\lt(\int_{\R} (\pa_v^2\varphi)^{-1} \lt( \lt( \pa_v^2\varphi-c   \rt)^2+\frac{\lt(b \lt( \pa_v^2\varphi-c   \rt)-c \lt( \pa_{x,v}^2\varphi-b  \rt)  \rt)^2}{\det(\nabla^2 \varphi)} \rt)e^{-U(x)}dx\rt)e^{-\frac{|v|^2}{2}}dv\\
	&+\int_{\R} \lt(\int_{ \YY_v} -\lal B(A^{-1}\nabla \varphi(x,v))- B(x,v),A^{-1}\nabla \varphi(x,v)-(x,v) \ral_A  e^{-U(x)}dx\rt)e^{-\frac{|v|^2}{2}}dv.
	\end{align*}
	Since $c<c_*$ there holds
	\begin{align*}
	cC_1\leq \frac{(\kappa_1\wedge \kappa_2)}{2}, cC_2\leq 1, 
	\end{align*}
	therefore, by the above steps,
	\begin{align*}
	\int_{\R} \lt( \int_{\YY_v} |\pa_v \varphi(x,v)-(bx+cv)|^2 e^{-U(x)}dx\rt)e^{-\frac{|v|^2}{2}}dv\leq  c \frac{(\kappa_1\wedge \kappa_2)}{2} \int_{\R} \lt(  \int_{\R\setminus \YY_v} |A^{-1}\nabla \varphi(x,v)-(x,v)|^2 e^{-U(x)}dx\rt)e^{-\frac{|v|^2}{2}}dv&\\
	+\int_{\R}\lt(\int_{\R} (\pa_v^2\varphi)^{-1} \lt( \lt( \pa_v^2\varphi-c   \rt)^2+\frac{\lt(b \lt( \pa_v^2\varphi-c   \rt)-c \lt( \pa_{x,v}^2\varphi-b  \rt)  \rt)^2}{\det(\nabla^2 \varphi)} \rt)e^{-U(x)}dx\rt)e^{-\frac{|v|^2}{2}}dv,&
	\end{align*}
	and
	\begin{align*}
	Z\JJ_A(f|f_\infty)&\geq c \frac{(\kappa_1\wedge \kappa_2)}{2}\int_{\R} \lt(  \int_{\R\setminus \YY_v} |A^{-1}\nabla \varphi(x,v)-(x,v)|^2 e^{-U(x)}dx\rt)e^{-\frac{|v|^2}{2}}dv\\
	&+\int_{\R} \lt( \int_{\YY_v} |\pa_v \varphi(x,v)-(bx+cv)|^2 e^{-U(x)}dx\rt)e^{-\frac{|v|^2}{2}}dv\\
	&+\int_{\R} \lt(\int_{ \YY_v} -\lal B(A^{-1}\nabla \varphi(x,v))- B(x,v),A^{-1}\nabla \varphi(x,v)-(x,v) \ral_A  e^{-U(x)}dx\rt)e^{-\frac{|v|^2}{2}}dv
	\end{align*}
	Finally, since $b>b_*$, we have by point $(iii)$ of Lemma \ref{lem:key1} 
		\begin{align*}
	Z\JJ_A(f|f_\infty)&\geq  c\frac{(\kappa_1\wedge \kappa_2)}{2}\int_{\R} \lt(  \int_{\R\setminus \YY_v} |A^{-1}\nabla \varphi(x,v)-(x,v)|^2 e^{-U(x)}dx\rt)e^{-\frac{|v|^2}{2}}dv\\
	&+ c \kappa_3\int_{\R} \lt(\int_{ \YY_v} |A^{-1}\nabla \varphi(x,v)-(x,v)|^2 e^{-U(x)}dx\rt)e^{-\frac{|v|^2}{2}}dv,
	\end{align*}
	and
			\begin{align*}
	\JJ_A(f|f_\infty)&\geq  c\min\lt(\frac{(\kappa_1\wedge \kappa_2)}{2},\kappa_3\rt)\int_{\R^2}  |A^{-1}\nabla \varphi(x,v)-(x,v)|^2 f_\infty(x,v)dxdv\\
	&\geq \rho(A)^{-1}c \min\lt(\frac{(\kappa_1\wedge \kappa_2)}{2},\kappa_3\rt)\int_{\R^2}  |A^{-1}\nabla \varphi(x,v)-(x,v)|_A^2 f_\infty(x,v)dxdv,
	\end{align*}
	and the result is proved with
	\[
	\kappa=\lt(\frac{\alpha+\frac{3}{2}+\sqrt{\lt(\alpha-\frac{1}{2}\rt)^2+1}}{2}\rt)^{-1}\min\lt(\frac{(\kappa_1\wedge \kappa_2)}{2},\kappa_3\rt).
	\]

\section*{Acknowledgements}
The author was supported by the Fondation Mathématique Jacques Hadamard, and warmly thanks Patrick Cattiaux and Arnaud Guillin for many advices, comments and discussions which have made this work possible.

\appendix

\section{Toolbox}
\textit{Proof of Theorem \ref{thm:Bre}} : \newline

$(i)$ Since the cost function $c(z)=|z|_A^2$, thanks to which $W_A$ is defined, is strictly convex (since $A$ is positive definite) and superlinear (i.e. $\lim_{|z|\rightarrow +\infty}\frac{|z|^2_A}{|z|}=+\infty$), we may invoke \cite[Theorem 2.44]{VilOT} (see also \cite[Theorem 1.17]{OTAM}), and for any $\mu,\nu\in \PP_2(\R^2)$ with smooth enough densities, we obtain that here exists a unique map $\mt : \R^2\mapsto \R^2$ such that $\mt\# \nu=\mu$ and
\[
W_A^2(\mu,\nu)=\int_{\R^2} |(x,v)-\mt(x,v)|^2_A\nu(x,v)dxdv.
\]
Moreover $\mt$ is given as
\[
\mt(z)=z-\nabla c^*(\nabla \Phi(z)),
\]
where $\Phi$ is some $c$-concave function (see \cite[Definition 2.33 ]{VilOT}. But since for any $z\in \R^2$, $\nabla c^*(z)=A^{-1}z$ we have
\[
\mt(z)=A^{-1}(Az-\nabla \Phi(z))=A^{-1}\nabla \lt( \frac{|z|^2_A}{2} -  \Phi(z) \rt). 
\]
We set
\[
\varphi(z)=\frac{|z|^2_A}{2}-\Phi(z),
\]
and observe that it is a convex functional since $\Phi$ is $|\cdot|^2_A$-concave (see \cite[Proposition 1.21]{OTAM}). \newline

$(iii)$ follows from the regularity theory for Monge-Ampère equation (see for instance \cite[Theorem 1]{Cor}) and the fact that $\mu$ and $\nu$ have $\CC^1$, non vanishing densities.\newline

$(ii)$ follows from point $(i)$ and the fact that, since $\forall(x,v)\in\R^2$ we have $\nabla \varphi^*(\nabla \varphi(x,v))=(x,v)$, then $(A^{-1}\nabla \varphi)^{-1}=\nabla \varphi^*(A\cdot)$ transports $\mu$ onto $\nu$, and point $(i)$.

\qed

\begin{lemma}
	\label{lem:key1}
		Assume that $U$ is of the form \eqref{eq:U}, for some $\alpha>0$ and some compactly supported on $[-R,R]$ function $\psi$, satisfying \eqref{eq:cond}.\newline
	Let $c=2b>0$,  and
	\[
	A=\begin{pmatrix} b+c\alpha
	& b \\ 
	b & c
	\end{pmatrix}.
	\]
	Assume that
	\[
	b>\frac{(\|\psi''\|_{L^\infty}+b)^2}{1+4b}+\gamma, \ \text{i.e.} \ b> b_*.
	\] 
		Then there are $\kappa_1,\kappa_2,\kappa_3>0$ such that for any $z_1=(x_1,v_1),z_2=(x_2,v_2)\in \R^{2}$, there holds, with $z'_i=(x'_i,v'_i)=A^{-1}z_i$ for $i=1,2$	
	\begin{itemize}
		\item[$(i)$] in the case $|x'_1|\geq R+1$ or $|x'_2|\geq R+1$
		\[
		-\lal B(z'_1)- B(z'_2),z'_1-z'_2 \ral_A\geq c \kappa_1 |z'_1-z'_2|^2
		\]
		with
		\[
		\kappa_1=\min \lt( \frac{1}{2} -\|\psi'\|_{L^\infty} , \frac{1}{2}\lt( \alpha -2\|\psi'\|_{L^\infty}  \rt)\rt),
		\]
		\item[$(ii)$] in the case $|x'_1|\leq R+1$ and $|x'_2|\leq R+1$ and $|x_1-x_2|<|v_1-v_2|$
		\[
		-\lal B(z'_1)- B(z'_2),z'_1-z'_2 \ral_A\geq c \kappa_2 |z'_1-z'_2|^2
		\]
		with 
		\[
		\kappa_2=\frac{1}{2} \lt( \frac{1}{2}-\|\psi''\|_{L^\infty}-\frac{\gamma}{2}\rt),
		\]
		\item[$(iii)$] in the case $|x_1|\leq R+1$ and $|x_2|\leq R+1$ and $|x_1-x_2|\geq |v_1-v_2|$ 
		\[
		-\lal B(z'_1)- B(z'_2),z'_1-z'_2 \ral_A+|v_1-v_2|^2\geq \kappa_3 |z'_1-z'_2|^2.
		\]

	\end{itemize}

\end{lemma}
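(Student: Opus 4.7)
The plan is to begin by computing $-\lal B(z'_1) - B(z'_2), z'_1 - z'_2 \ral_A$ explicitly. Using $B(x,v) = (v, -\alpha x - \psi'(x) - v)$ together with the shape of $A$ and the choice $c = 2b$, the $\alpha \Delta x \Delta v$ contributions cancel algebraically. Setting $\Delta x := x'_1 - x'_2$, $\Delta v := v'_1 - v'_2$, $\Delta \psi' := \psi'(x'_1) - \psi'(x'_2)$, one obtains the key identity
\begin{align*}
-\lal B(z'_1) - B(z'_2), z'_1 - z'_2 \ral_A = b\lt[(\Delta v)^2 + \alpha (\Delta x)^2 + \Delta x \Delta \psi' + 2 \Delta v \Delta \psi'\rt].
\end{align*}
Each of the three cases then amounts to controlling the sign-indefinite cross-terms $\Delta x \Delta \psi'$ and $\Delta v \Delta \psi'$ under different geometric hypotheses on $(z'_1, z'_2)$.

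In case $(i)$, the compact support of $\psi'$ is decisive: WLOG $|x'_1| \geq R+1$ forces $\psi'(x'_1) = 0$. Either $|x'_2| \geq R$, in which case $\Delta \psi' = 0$ and the bound is immediate; or $|x'_2| < R$, in which case $|\Delta x| > 1$ and hence the effective Lipschitz estimate $|\Delta \psi'| = |\psi'(x'_2)| \leq \|\psi'\|_{L^\infty} \leq \|\psi'\|_{L^\infty}|\Delta x|$ holds. Young's inequality applied to $\Delta x \Delta \psi'$ and $2\Delta v \Delta \psi'$ then leaves $b[(1 - \|\psi'\|_{L^\infty})(\Delta v)^2 + (\alpha - 2\|\psi'\|_{L^\infty})(\Delta x)^2]$, which is $\geq c\kappa_1 |z'_1 - z'_2|^2$ thanks to $\alpha > 2\|\psi'\|_{L^\infty}$ and $\|\psi'\|_{L^\infty} < 1/2$ in \eqref{eq:cond}.

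For cases $(ii)$ and $(iii)$ I would instead use the global Lipschitz bound $|\Delta \psi'| \leq \|\psi''\|_{L^\infty}|\Delta x|$ (valid since $\psi \in \CC^2$), and write $\Delta \psi' = \theta \Delta x$ with $\theta \in [-\|\psi''\|_{L^\infty}, \|\psi''\|_{L^\infty}]$. In case $(ii)$, the constraint $|\Delta x| \leq |\Delta v|$ permits bounding $2|\Delta v \Delta \psi'| \leq 2\|\psi''\|_{L^\infty}(\Delta v)^2$, so the identity simplifies to $b[(1 - 2\|\psi''\|_{L^\infty})(\Delta v)^2 - \gamma (\Delta x)^2]$ using $\alpha - \|\psi''\|_{L^\infty} = -\gamma$; the cone inequalities $(\Delta v)^2 \geq \frac{1}{2}|z'_1 - z'_2|^2 \geq (\Delta x)^2$ yield the announced $\kappa_2$.

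Case $(iii)$ is the main obstacle, since no coercivity is available in the position variable and the extra term $|v_1 - v_2|^2 = b^2(\Delta x + 2 \Delta v)^2$ must be used to compensate. The sum becomes a quadratic form $Q_\theta(\Delta x, \Delta v)$ whose matrix has entries depending linearly on $\theta$; completing the square in $\Delta v$ reduces positivity to controlling
\begin{align*}
\frac{b\lt[b + \alpha(1 + 4b) + \theta(1 - \theta)\rt]}{1 + 4b}.
\end{align*}
The infimum over $\theta$ is attained at $\theta = -\|\psi''\|_{L^\infty}$, and combined with the cone inequality $(\Delta x)^2 \geq \frac{1}{2}|z'_1 - z'_2|^2$ from $|\Delta x| \geq |\Delta v|$, this produces a positivity requirement that the hypothesis $b > \frac{(\|\psi''\|_{L^\infty}+b)^2}{1+4b} + \gamma$ (equivalently $b > b_*$ from \eqref{eq:cond}) is precisely tailored to satisfy. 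The delicate step is the verification that this quantitative matching holds as stated; once established, the resulting $\kappa_3 > 0$ depends only on $U$, and the proof is complete.
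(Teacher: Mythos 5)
Your proof is structurally sound and essentially correct, though you leave one link unverified. The identity you derive in the first paragraph is exactly the one the paper computes (the cancellation of the $\alpha\,\Delta x\,\Delta v$ terms indeed comes from the choice $a = b + c\alpha$), and your treatment of cases $(i)$ and $(ii)$ reproduces the paper's argument: compactly supported $\psi'$ plus $|\Delta x|\geq 1$ for $(i)$, the mean-value bound $|\Delta \psi'|\leq\|\psi''\|_{L^\infty}|\Delta x|$ plus the primed cone for $(ii)$, then Young. (Note: the lemma statement writes the cone condition $|x_1-x_2|\lessgtr|v_1-v_2|$ in the unprimed variables, but the paper's own proof and your proof both read it in the primed variables $|\Delta x|\lessgtr|\Delta v|$; this matches how the lemma is later applied via the sets $\YY_v$, so your reading is the right one.)

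Case $(iii)$ is where you diverge from the paper, and your route is the sharper one. The paper throws away the sign of the cross-term coming from $|v_1-v_2|^2=b^2(\Delta x+2\Delta v)^2$, bounding $2bc\,\Delta x\,\Delta v\geq -2bc|\Delta x||\Delta v|$, and then applies Young's inequality; this is what produces the stated requirement $b>\frac{(\|\psi''\|_{L^\infty}+b)^2}{1+4b}+\gamma$, i.e.\ $b>b_*$. You instead write $\Delta\psi'=\theta\,\Delta x$ with $|\theta|\leq\|\psi''\|_{L^\infty}$ and complete the square in $\Delta v$ exactly, which correctly yields the residual coefficient $\frac{b[b+\alpha(1+4b)+\theta(1-\theta)]}{1+4b}$ for $(\Delta x)^2$, and the minimum over $\theta\in[-\|\psi''\|_{L^\infty},\|\psi''\|_{L^\infty}]$ is indeed at $\theta=-\|\psi''\|_{L^\infty}$ since $\|\psi''\|_{L^\infty}<\tfrac12$. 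Dropping the completed square and invoking $(\Delta x)^2\geq\tfrac12|z_1'-z_2'|^2$ on the cone is then fine.

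The one real gap is your last sentence: you assert without checking that $b>b_*$ supplies the positivity your argument needs, but your positivity requirement is not the same as the lemma's hypothesis, and you should say why one implies the other. Your condition reduces to $b+\alpha(1+4b)-\|\psi''\|_{L^\infty}(1+\|\psi''\|_{L^\infty})>0$, i.e.\ $b(1+4\alpha)>\gamma+\|\psi''\|_{L^\infty}^2$. This is in fact \emph{strictly weaker} than the hypothesis and the implication is immediate from the definition of $b_*$ in \eqref{eq:cond}: since $\sqrt{1-2(\gamma+\|\psi''\|_{L^\infty})}\leq 1$, one has $b_*=(\gamma+\|\psi''\|_{L^\infty}^2)\cdot\frac{2}{1+\sqrt{1-2(\gamma+\|\psi''\|_{L^\infty})}}\geq\gamma+\|\psi''\|_{L^\infty}^2$, so $b>b_*$ gives $b>\gamma+\|\psi''\|_{L^\infty}^2$ and a fortiori $b(1+4\alpha)>\gamma+\|\psi''\|_{L^\infty}^2$. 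Add that one line and the proof of $(iii)$ is complete, with an explicit $\kappa_3=\frac{b[b+\alpha(1+4b)-\|\psi''\|_{L^\infty}(1+\|\psi''\|_{L^\infty})]}{2(1+4b)c}$; this is actually a better constant than the paper obtains.
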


\begin{proof}
	
	Recall that $U'(x)=\alpha x+\psi'(x)$ and observe that

	\begin{align*}
\lal B(z'_1)-B(z'_2),z'_1-z'_2 \ral_A=&(b+c\alpha)(v'_1-v'_2) (x'_1-x'_2)+b(v'_1-v'_2)(v'_1-v'_2)-b\lt(  (v'_1-v'_2)+( U'(x'_1)- U'(x'_2))  \rt)(x'_1-x'_2)\\
&-c\lt(  (v'_1-v'_2)+( U'(x'_1)- U'(x'_2))  \rt)(v'_1-v'_2)\\
&=-(c-b)|v'_1-v'_2|^2-b\alpha|x'_1-x'_2|^2\\
&-b(\psi'(x'_1)- \psi'(x'_2))(x'_1-x'_2)-c( \psi'(x'_1)- \psi'(x'_2))(v'_1-v'_2)
\end{align*}
$\diamond$ Proof of $(i)$ \newline

		W.l.o.g. assume that $|x'_1|\geq R+1$, and $|x'_2|\leq R$ (otherwise the result is obvious since $\psi'$ is compactly supported on $[-R,R]$)\newline
		Since
		\[
		|\psi'(x'_1)-\psi'(x'_2)|\leq |\psi'(x'_1)|\leq \|\psi'\|_{L^\infty}\leq \|\psi'\|_{L^\infty}|x'_1-x'_2|,
				\]
	by Young's inequality 
		\begin{align*}
		-\lal B(z'_1)-B(z'_2),z'_1-z'_2 \ral_A&\geq \lt(c-b\rt)|v'_1-v'_2|^2+ b\lt(\alpha-\|\psi'\|_{L^\infty}\rt)|x'_1-x'_2|^2-c\|\psi'\|_{L^\infty}|x'_1-x'_2||v'_1-v'_2|\\
		&=c\lt( \frac{1}{2}|v'_1-v'_2|^2+\frac{1}{2} \lt(\alpha-\|\psi'\|_{L^\infty}\rt) |x'_1-x'_2|^2-2\|\psi'\|_{L^\infty}|x'_1-x'_2||v'_1-v'_2|    \rt)\\
		&\geq  c \lt(   \lt( \frac{1}{2} -\|\psi'\|_{L^\infty} \rt) |v'_1-v'_2|^2 + \frac{1}{2}\lt( \alpha -2\|\psi'\|_{L^\infty}  \rt) |x'_1-x'_2|^2   \rt),
		\end{align*}
		and the result is proved with 
		\[
		\kappa_1=\min \lt( \frac{1}{2} -\|\psi'\|_{L^\infty} , \frac{1}{2}\lt( \alpha -2\|\psi'\|_{L^\infty}  \rt)\rt),
		\]
		due to condition \eqref{eq:cond}
		
	$\diamond$ Proof of $(ii)$ \newline

	Since $|x'_1-x'_2|\leq |v'_1-v'_2|$	
	
	\begin{align*}
	-\lal B(z'_1)-B(z'_2),z'_1-z'_2 \ral_A&\geq 
	\lt(c-b\rt)|v'_1-v'_2|^2-\gamma b|x'_1-x'_2|^2-c\|\psi''\|_{L^\infty}|x'_1-x'_2| |v'_1-v'_2|\\
	&\geq c\lt( \frac{1}{2}-\|\psi''\|_{L^\infty}-\frac{\gamma}{2} \rt)|v'_1-v'_2|^2,
	\end{align*}
and the result is proved with
\[
\kappa_2= \frac{1}{2} \lt( \frac{1}{2}-\|\psi''\|_{L^\infty}-\frac{\gamma}{2}\rt).
\]
	$\diamond$ Proof of $(iii)$ \newline
	
We denote $r=x_1-x_2,s=v_1-v_2$ and $r'=x'_1-x'_2, s'=v'_1-v'_2$. Since
	\[
	A\begin{pmatrix}
	r'\\ 
	s'
	\end{pmatrix}=	\begin{pmatrix}
	r\\ 
	s
	\end{pmatrix} =\begin{pmatrix}
	ar'+bs'\\ 
	br'+cs'
	\end{pmatrix},
	\]
	where $a=b+c\alpha$. We have
	\begin{align*}
	-\lal B(z'_1)-B(z'_2),z'_1-z'_2 \ral_A+|v_1-v_2|^2&\geq 
	\lt(c-b\rt)|s'|^2-\gamma b|r'|^2-c\|\psi''\|_{L^\infty}|r'| |s'|+b^2|r'|^2+2bcr's'+c^2 |s'|^2\\
	&\geq (c-b+c^2)|s'|^2+b (b-\gamma)|r'|^2-\lt(2bc+c\|\psi''\|_{L^\infty}\rt)|r'||s'|\\
	&=c \lt(\lt(\frac{1}{2}+c\rt)|s'|^2 +\frac{1}{2} (b-\gamma) |r'|^2 -(\|\psi''\|_{L^\infty}+b) |r'||s'| \rt).
	\end{align*}
	By Young's inequality
	\begin{align*}
	-(\|\psi''\|_{L^\infty}+b) |r'||s'|&=(\|\psi''\|_{L^\infty}+b) \sqrt{\frac{2\lt(\frac{1}{2}+c\rt)}{\|\psi''\|_{L^\infty}+b}}|s'|\sqrt{\frac{\|\psi''\|_{L^\infty}+b}{2\lt(\frac{1}{2}+c\rt)}}|r'|\\
	&\geq -\lt(\frac{1}{2}+c\rt)|s'|^2-\frac{(\|\psi''\|_{L^\infty}+b)^2}{4\lt(\frac{1}{2}+c\rt)}|r'|^2,
	\end{align*}
	and	provided that
	\[
	\frac{1}{2}(b-\gamma)-\frac{(\|\psi''\|_{L^\infty}+b)^2}{2+4b}>0,
	\] 
	the result is proved with
	\[
	\kappa_3=\frac{1}{2}\lt(\frac{1}{2}(b-\gamma)-\frac{(\|\psi''\|_{L^\infty}+b)^2}{2+4b}\rt)
	\]

\end{proof}

\begin{lemma}
	\label{lem:key2}
	Let $M=\begin{pmatrix}
	m_{1,1}& m_{1,2} \\ 
	m_{1,2}& m_{2,2}
	\end{pmatrix} \in \MM_{2}(\R)$ be a symmetric positive definite matrix. For any $b,c\in \R$, there holds
	\[
\lt( (m_{2,2}-c)^2+\frac{(b(m_{2,2}-c)-c(m_{1,2}-b))^2}{\det(M)}  \rt)	(m_{2,2})^{-1}\geq  (m_{1,2}-b)^2	(m_{1,1})^{-1},
	\]
with equality if and only if
\[
m_{2,2}=c+\frac{(m_{1,2}-b)m_{1,2}}{m_{1,1}}.
\]
\end{lemma}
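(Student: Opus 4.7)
The plan is to reduce the inequality to a manifestly non-negative expression by first making an algebraic simplification that reveals a clean structure, then clearing denominators and cancelling bulk terms.

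The key observation I would make first is that the cross term in the numerator simplifies dramatically:
\[
b(m_{2,2}-c)-c(m_{1,2}-b) = bm_{2,2}-bc-cm_{1,2}+cb = bm_{2,2}-cm_{1,2}.
\]
So the left-hand side can be rewritten as
\[
\frac{1}{m_{2,2}}\left((m_{2,2}-c)^2+\frac{(bm_{2,2}-cm_{1,2})^2}{\det(M)}\right).
\]
Next I would introduce the shorthand $x=m_{2,2}-c$ and $y=m_{1,2}-b$, so that $c=m_{2,2}-x$ and $b=m_{1,2}-y$, and compute
\[
bm_{2,2}-cm_{1,2}=(m_{1,2}-y)m_{2,2}-(m_{2,2}-x)m_{1,2}=m_{1,2}x-m_{2,2}y.
\]
With these substitutions the desired inequality becomes
\[
m_{1,1}\bigl[x^2\det(M)+(m_{1,2}x-m_{2,2}y)^2\bigr]\geq m_{2,2}\det(M)\,y^2,
\]
after multiplying through by the positive quantity $m_{1,1}m_{2,2}\det(M)$ (recall $M$ is positive definite, so $m_{1,1},m_{2,2}>0$ and $\det(M)>0$).

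Then the main task is pure expansion. Writing $\det(M)=m_{1,1}m_{2,2}-m_{1,2}^2$ and expanding $(m_{1,2}x-m_{2,2}y)^2$, the cross-term $-\alpha\beta^2 x^2$ cancels against $+\alpha\beta^2 x^2$, and the terms $\alpha\gamma^2 y^2$ cancel against $-\alpha\gamma^2 y^2$ (using temporarily $\alpha=m_{1,1}$, $\beta=m_{1,2}$, $\gamma=m_{2,2}$). What remains collapses to
\[
m_{2,2}\bigl(m_{1,1}x-m_{1,2}y\bigr)^2\geq 0,
\]
which holds since $m_{2,2}>0$. This is the only step that requires attention; since all manipulations are reversible, equality holds precisely when $m_{1,1}x=m_{1,2}y$, i.e.\ $m_{1,1}(m_{2,2}-c)=m_{1,2}(m_{1,2}-b)$, which rearranges to the stated condition $m_{2,2}=c+(m_{1,2}-b)m_{1,2}/m_{1,1}$.

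The main obstacle, if there is one, is purely bookkeeping: spotting the simplification $b(m_{2,2}-c)-c(m_{1,2}-b)=bm_{2,2}-cm_{1,2}$ up front is essential, because otherwise the expansion is substantially more cumbersome. Once this is in place, the computation is routine and the identification of the equality case is immediate from the perfect square.
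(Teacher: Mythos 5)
Your proof is correct, and it takes a genuinely different (and in my view cleaner) route than the paper's. Both arguments exploit the simplification $b(m_{2,2}-c)-c(m_{1,2}-b)=bm_{2,2}-cm_{1,2}$ and clear denominators to arrive at the polynomial inequality
\[
(m_{2,2}-c)^2\det(M)+(bm_{2,2}-cm_{1,2})^2-\frac{(m_{1,2}-b)^2\,m_{2,2}\det(M)}{m_{1,1}}\geq 0.
\]
The paper then views this expression as a cubic in the variable $m_{2,2}$, factors out $m_{2,2}$, computes the discriminant of the residual quadratic, observes that it vanishes, and concludes that the cubic factors as $m_{1,1}\,m_{2,2}\,(m_{2,2}-\text{root})^2$. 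You instead substitute $x=m_{2,2}-c$, $y=m_{1,2}-b$ so that $bm_{2,2}-cm_{1,2}=m_{1,2}x-m_{2,2}y$, multiply out, cancel, and land directly on the identity
\[
m_{1,1}\bigl[x^2\det(M)+(m_{1,2}x-m_{2,2}y)^2\bigr]-m_{2,2}\det(M)y^2=m_{2,2}(m_{1,1}x-m_{1,2}y)^2,
\]
from which both the inequality and the exact equality condition $m_{1,1}(m_{2,2}-c)=m_{1,2}(m_{1,2}-b)$ are immediate. Your approach dispenses with the ``polynomial in $m_{2,2}$'' viewpoint and the discriminant calculation (where, incidentally, the paper's displayed discriminant has a sign typo inherited from the definition of $p$); the change of variables makes the perfect-square structure visible with less bookkeeping. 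The paper's discriminant route is the more mechanical one and would generalize to cases where the answer is not a perfect square; here, where it is, your direct factorization is the more transparent argument.
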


\begin{proof}
	First observe that since $M$ is symmetric positive definite it holds $\det(M)>0$ and	$m_{2,2}>\frac{m_{1,2}^2}{m_{1,1}}$. Then the claimed inequality is equivalent to
	\[
	(m_{2,2}-c)^2\det(M)+(b(m_{2,2}-c)-c(m_{1,2}-b))^2- (m_{1,2}-b)^2	\frac{m_{2,2}}{m_{1,1}}\det(M)\geq 0.
	\]
	Therefore the claimed inequality will follow from the sign study on $\lt(\frac{m_{1,2}^2}{m_{1,1}},+\infty\rt)$ the function
	
	\begin{align*}
g(x)&=(x-c)^2(xm_{1,1}-m_{1,2}^2)+(bx-cm_{1,2})^2- (m_{1,2}-b)^2	\frac{x}{m_{1,1}}(xm_{1,1}-m_{1,2}^2)\\
&=(x^2-2xc+c^2)(xm_{1,1}-m_{1,2}^2)+(b^2x^2-2bcxm_{1,2}+c^2m_{1,2}^2)-(m_{1,2}-b)^2(x^2-x\frac{m_{1,2}^2}{m_{1,1}})\\
&=m_{1,1}x^3+\lt(  -2cm_{1,1}-m_{1,2}^2+b^2 -(m_{1,2}-b)^2 \rt)x^2\\
&+\lt( c^2m_{1,1}+2cm_{1,2}^2-2bcm_{1,2}+ \frac{m_{1,2}^2}{m_{1,1}}(m_{1,2}-b)^2\rt) x -c^2 m_{1,2}^2+c^2m_{1,2}^2\\
&=m_{1,1}x^3-2\lt(  cm_{1,1}+(m_{1,2}-b)m_{1,2} \rt)x^2+\lt( c^2m_{1,1}+2cm_{1,2}(m_{1,2}-b)+ \frac{m_{1,2}^2}{m_{1,1}}(m_{1,2}-b)^2\rt)
x \\
&=x\lt( m_{1,1}x^2-2\lt(  cm_{1,1}+(m_{1,2}-b)m_{1,2} \rt)x+\lt( c^2m_{1,1}+2cm_{1,2}(m_{1,2}-b)+ \frac{m_{1,2}^2}{m_{1,1}}(m_{1,2}-b)^2\rt)
   \rt)=xp(x).
\end{align*}
The discriminant $\Delta$ of the second order polynomial $p$ is given by
\begin{align*}
\Delta&=4\lt(\lt(   cm_{1,1}+(m_{1,2}-b)m_{1,2} \rt)^2-m_{1,1}\lt( c^2m_{1,1}+2cm_{1,2}(m_{1,2}-b)- \frac{m_{1,2}^2}{m_{1,1}}(m_{1,2}-b)^2\rt)\rt)\\
&=4\lt(\lt(   cm_{1,1}+(m_{1,2}-b)m_{1,2} \rt)^2-\lt( c^2m_{1,1}^2+2cm_{1,2}m_{1,1}(m_{1,2}-b)+ (m_{1,2}-b)^2m_{1,2}^2\rt)\rt).\\
&=0
\end{align*}
Therefore 
\[
g(x)=m_{1,1}x\lt(x-\frac{cm_{1,1}+(m_{1,2}-b)m_{1,2}}{m_{1,1}}\rt)^2,
\]
and $g$ is non negative on $\lt(\frac{m_{1,2}^2}{m_{1,1}},+\infty\rt)$  which concludes the proof.

\end{proof}

\begin{lemma}
	\label{lem:novoid}
	There exist $R>0$, $\psi\in \CC^2(\R)$ compactly supported on $[-R,R]$ and $\alpha>0$ satisfying condition \eqref{eq:cond}.
\end{lemma}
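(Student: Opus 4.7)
The plan is to exhibit an explicit family $\{\psi_{\alpha,N}\}$ of perturbations, indexed by a small parameter $\alpha>0$ and a large frequency $N\in\mathbb{N}$, and to show that for $\alpha$ small enough and $N$ large enough, all six inequalities of \eqref{eq:cond} are simultaneously satisfied with this choice of $\alpha$ and $\psi=\psi_{\alpha,N}$. The crucial design constraint is that $\|\psi''\|_{L^\infty}$ must exceed $\alpha$ while $\|\psi'\|_{L^\infty}$ must be much smaller than $\alpha$ (needed to keep the factor $\alpha - 2\|\psi'\|_{L^\infty}$ in $c_*$ comparable to $\alpha$). This rules out a simple scaled bump $\mu\phi(x/R)$, whose two norms are of the same order, and motivates using a rapidly oscillating second derivative whose antiderivative stays small.

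Concretely, fix once and for all a $\CC^2$ cut-off $\chi:\R\to[0,1]$ supported in $[-2,2]$ with $\chi\equiv 1$ on $[-1,1]$, set $R=2$, and for $\alpha\in(0,10^{-2})$ and an integer $N\geq 2$ define
\[
\psi_{\alpha,N}(x) := (\alpha+\alpha^2)\,N^{-2}\,\chi(x)\sin(Nx).
\]
A direct computation yields
\[
\psi_{\alpha,N}''(x) = -(\alpha+\alpha^2)\chi(x)\sin(Nx) + 2(\alpha+\alpha^2)N^{-1}\chi'(x)\cos(Nx) + (\alpha+\alpha^2)N^{-2}\chi''(x)\sin(Nx),
\]
and similarly $\|\psi_{\alpha,N}'\|_{L^\infty}\leq C_\chi(\alpha+\alpha^2)/N$ and $\|\psi_{\alpha,N}\|_{L^\infty}\leq (\alpha+\alpha^2)/N^2$, with $C_\chi$ depending only on $\chi$. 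Evaluating at $x_0=\pi/(2N)\in[-1,1]$ gives $|\psi_{\alpha,N}''(x_0)|\geq \alpha+\alpha^2 - C_\chi(\alpha+\alpha^2)/N$, while the pointwise upper bound gives $\|\psi_{\alpha,N}''\|_{L^\infty}\leq(\alpha+\alpha^2)(1+C_\chi/N)$. Hence, for $N$ sufficiently large depending on $\alpha$, we secure
\[
\alpha<\|\psi_{\alpha,N}''\|_{L^\infty}<\alpha+2\alpha^2<10^{-1}, \qquad 2\|\psi_{\alpha,N}'\|_{L^\infty}<\alpha,
\]
so that $\gamma:=\|\psi_{\alpha,N}''\|_{L^\infty}-\alpha\in(0,2\alpha^2)$ and the first line of \eqref{eq:cond} holds.

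It remains to check $c_*>2b_*$. From $\gamma+\|\psi''\|_{L^\infty}^2\leq 2\alpha^2+(\alpha+2\alpha^2)^2\leq 4\alpha^2$ and $2(\gamma+\|\psi''\|_{L^\infty})\leq 4\alpha\leq 1/2$ (valid for $\alpha\leq 1/8$), the definition of $b_*$ gives
\[
b_*\leq \frac{8\alpha^2}{1+1/\sqrt{2}}\leq 6\alpha^2.
\]
On the other hand, with $R=2$, $\|\psi\|_{L^\infty}\leq 1$ and $\|\psi'\|_{L^\infty}\leq \alpha/4$ for $N$ large, the formula for $c_*$ yields
\[
c_*\geq \frac{1}{20}\,e^{-8\alpha}\,e^{-1}\,\frac{\alpha}{8}\geq c_0\,\alpha
\]
for some absolute constant $c_0>0$, provided $\alpha$ is small enough. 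Thus $c_*>2b_*$ reduces to $c_0\alpha>12\alpha^2$, i.e. $\alpha<c_0/12$, which is compatible with all previous smallness constraints on $\alpha$.

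The main obstacle is not any individual inequality but the identification of the right ansatz: the combination $\|\psi'\|_{L^\infty}\ll\alpha<\|\psi''\|_{L^\infty}$ rules out fixed-shape perturbations and forces one to oscillate $\psi''$ (here via the factor $\sin(Nx)$) to decouple the two norms. Once this ansatz is adopted, the fact that $b_*=O(\alpha^2)$ while $c_*\gtrsim\alpha$ as $\alpha\to 0$ makes the crucial inequality $c_*>2b_*$ automatic for all sufficiently small $\alpha$.
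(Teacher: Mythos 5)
Your construction is correct and does verify \eqref{eq:cond}, but it takes a genuinely different route from the paper, and your stated motivation for the route is wrong. The paper's own proof uses precisely the ``simple scaled bump'' you claim is ruled out: it sets $R=1$, $\Psi(x)=(x^2-1)^4\mathbf 1_{[-1,1]}$, and $\psi=\phi\Psi$, $\alpha=\phi A$ for a small amplitude $\phi$ and a free multiplier $A$. The point you miss is that $\alpha$ is a free parameter in the interval $\bigl(2\|\psi'\|_{L^\infty},\|\psi''\|_{L^\infty}\bigr)$; since the \emph{unscaled} profile has $2\|\Psi'\|_{L^\infty}\simeq 3.8<8=\|\Psi''\|_{L^\infty}$, scaling by $\phi$ leaves a non-empty interval for $\alpha=\phi A$ with $A\in(3.8,8)$. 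Choosing $A$ close to $\|\Psi''\|_{L^\infty}$ makes $\gamma=\phi(\|\Psi''\|_{L^\infty}-A)$ an arbitrarily small multiple of $\phi$, so $b_*\sim\gamma+O(\phi^2)$ is small while $c_*\gtrsim\phi$, and $c_*>2b_*$ follows for $\phi$ small. There is no need for $\|\psi'\|_{L^\infty}\ll\alpha$, only $2\|\psi'\|_{L^\infty}<\alpha$ with $\alpha-2\|\psi'\|_{L^\infty}$ bounded below on the scale of $\alpha$, which the bump already provides. Your oscillating ansatz $\psi_{\alpha,N}=(\alpha+\alpha^2)N^{-2}\chi(x)\sin(Nx)$ achieves the same end with a single small parameter $\alpha$ (plus the large auxiliary frequency $N$), and the arithmetic in your verification of $b_*\le 6\alpha^2$ and $c_*\ge c_0\alpha$ is sound; so the construction is a valid, slightly more parameter-uniform alternative, but the ``main obstacle'' you identify does not exist, and the paper's elementary amplitude scaling of a fixed polynomial bump is the simpler argument.
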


\begin{proof}
	
	We set $R=1$ and 
	\[
	\Psi(x)=(x-1)^4(x+1)^4\mb_{x\in [-1,1]},
	\]
	which is $\CC^2$. We numerically check that 
	\[
		\|\Psi\|_{L^\infty}=1, \ \|\Psi'\|_{L^\infty}\simeq 1.9<2 , \ \|\Psi''\|_{L^\infty}=8.
	\]
	Let $A\in \lt( 2\|\Psi'\|_{L^\infty}  ,\|\Psi''\|_{L^\infty}\rt)$ and set $\Gamma=\|\Psi''\|_{L^\infty}-A$. Let $\phi>0$ and fix
	\[
	\alpha=\phi A, \psi(x)=\phi\Psi(x), \gamma =\phi\Gamma.
	\]
	We impose that $\gamma<\|\psi''\|^2_{L^\infty}$, i.e.
	\[
	\phi> \frac{\|\Psi''\|_{L^\infty}-A}{\|\Psi''\|^2_{L^\infty}}. 
	\]
	In view of \eqref{eq:cond}, we also need to impose that $\phi< \|\Psi''\|_{L^\infty}^{-1} 10^{-1}$. \newline
	Therefore $c_*$ in \eqref{eq:cond} is (brutally) bounded by below by
	\begin{align*}
	c_*> \frac{e^{ -\frac{1}{2} }}{10} e^{-\phi} \lt( 1 \wedge \phi \frac{(A -2\|\Psi'\|_{L^\infty})}{4} \rt)>\frac{e^{ -\frac{1}{2} }}{20} e^{-\phi} \phi ,
	\end{align*}
	since we can always choose $A$ close enough to $\|\Psi''\|$ so that $A -2\|\Psi'\|_{L^\infty}>4$. \\
	On the other hand $b_*$ is bounded by above by
	\begin{align*}
	b_*< (\gamma+\|\psi''\|^2_{L^\infty})\frac{4}{3}<\frac{8}{3}\|\psi''\|^2_{L^\infty}=\frac{8}{3}\phi^2\|\Psi''\|^2_{L^\infty}.
	\end{align*}
	We can now choose $A$ close enough to $\|\Psi''\|$, so that $\phi$ maybe small enough so that $c_*>2b_*$
\end{proof}

%%%%%%%%%%%%%%%%%%%%%%%%%%%%%%
%
%   \section{A toy example : The FitzHugh-Nagumo model}
%
%%%%%%%%%%%%%%%%%%%%%%%%%%%%%%%

%%%%%%%%%%%%%%%%%%%%%%%%%%%%%%%%%%%%%%%%%%%%%%%%%%%%%%%%%%%%%%%%%%%%%%%%%%%%%%%%%%%%%%%%%%%%%%%%%%%%%%%%%%%%%%%%%%%%%%%%%%%%%%%%%%%%%%%%%%%%%%
%
%
%         Section: References
%
%
%%%%%%%%%%%%%%%%%%%%%%%%%%%%%%%%%%%%%%%%%%%%%%%%%%%%%%%%%%%%%%%%%%%%%%%%%%%%%%%%%%%%%%%%%%%%%%%%%%%%%%%%%%%%%%%%%%%%%%%%%%%%%%%%%%%%%%%%%%%%%%%%

\end{document}